\DeclareMathOperator{\RE}{Re}
\newtheorem{theorem}{Theorem}
\newtheorem{corollary}{Corollary}[theorem]
\newtheorem{lemma}[theorem]{Lemma}
\newtheorem*{remark}{Remark}
\begin{document}


\title{Hermitian-Toeplitz Determinants for Certain Univalent Functions}
\author{Surya Giri and S. Sivaprasad Kumar}
\date{}

\maketitle





\begin{abstract}
    Sharp upper and lower bounds for the second and third order Hermitian-Toeplitz determinants are obtained for some generalized subclasses of starlike and convex functions. Applications of these results are also discussed for several widely known classes.
\end{abstract}
\vspace{0.5cm}
	\noindent \textit{Keywords:} Univalent functions; Starlike functions; Close-to-Convex functions; Hermitian-Toeplitz Determinant.\\
	\noindent \textit{AMS Subject Classification:} 30C45, 30C50, 30C80.





%

\section{Introduction}
    Let $\mathcal{A}$ be the class of functions of the form $f(z)=z+a_2 z^2 +a_3 z^3  +\cdots$, $(a_2\neq 0)$, which are analytic in the open unit disk $\mathbb{D}=\left\{ z\in \mathbb{C}:\lvert z \rvert < 1\right\}$ and $\mathcal{S}$ be the subclass of $\mathcal{A}$, consisting of univalent functions. The subclasses of $\mathcal{S}$ of starlike and convex functions are denoted by $\mathcal{S}^*$ and $\mathcal{C}$ respectively. A function $f\in \mathcal{S}^*$ if and only if
    $\RE \left( {z f'(z)}/{f(z)} \right)>0,$ $z\in \mathbb{D}$.
   Also, a function $f \in \mathcal{C}$ if and only if
   $\RE  \left(1 + {z f''(z)}/ {f'(z)} \right) >0,$ $z\in \mathbb{D}$.
   A function $f\in \mathcal{A}$ is said to be close-to-convex \cite{kaplan} if there is a function $g\in \mathcal{S}^*$ such that
\begin{equation}\label{clcv}
   \RE \left( \frac{z f'(z)}{g(z)} \right)>0.
\end{equation}
   Geometrically, a function $f$ is said to be close-to-convex if the image of $\lvert z\rvert=r <1$ has no ``large hairpin'' turns; that is, there are no sections of the image curve under $f$ in which the tangent vectors turn backward through an angle greater than or equal to $\pi.$ We denote by $\mathcal{K}$ the class of close-to-convex functions.
   Various choices of $g(z)$ yield some well known subclasses of $\mathcal{K}$. For instance,
\begin{align*}
     \mathcal{F}_1 &=\left\{ f\in \mathcal{S}: \RE (1-z)f'(z)>0\right\},\\
     \mathcal{F}_2 &=\left\{ f\in \mathcal{S}: \RE (1-z^2)f'(z)>0\right\},\\
     \mathcal{F}_3 &=\left\{ f\in \mathcal{S}: \RE (1-z)^2f'(z)>0\right\},\\
      \mathcal{F}_4 &=\left\{ f\in \mathcal{S}: \RE (1-z+z^2)f'(z)>0\right\}
\end{align*}
  and
    $$ \mathcal{R} =\left\{ f\in \mathcal{S}: \RE f'(z)>0\right\}. $$
    Ozaki \cite{ozaki} has proved that condition (\ref{clcv}) is sufficient  for a function $f$ to be univalent. Functions in the class $\mathcal{F}_2$ and $\mathcal{F}_4$ exhibit nice geometric properties.  Functions in the class $\mathcal{F}_2$ map $\mathbb{D}$ univalently onto a domain convex in the direction of imaginary axis whereas functions in the class $\mathcal{F}_4$ map $\mathbb{D}$ univalently onto a domain convex in the direction of real axis.

    For an analytic univalent function $\varphi(z)$ having positive real part with $\varphi(0)=1$, $\varphi'(0)>0$,  starlike with respect to 1 and symmetric about the real axis,  Ma and Minda \cite{MaMinda} introduced the following subclasses:
    $$ \mathcal{S}^*(\varphi)=\left\{ f \in \mathcal{A}: \frac{z f'(z)}{f(z)}\prec \varphi(z) \right\} ,$$
    $$ \mathcal{C}(\varphi)=\left\{ f \in \mathcal{A}: 1+ \frac{z f''(z)}{f'(z)}\prec \varphi(z) \right\}, $$
    where the symbol `$\prec$' is used for subordination \cite{good}.
   On taking $\varphi(z)= 1 + \left(2/{\pi^2}\right) \left(\log\left((1 + \sqrt{z})/(1 - \sqrt{z})\right)\right)^2$, we obtain the class $\mathcal{S}_P$ of parabolic starlike functions introduced by R\o nning \cite{ronning}. Various choices of $\phi(z)$ yield different subclasses of starlike and convex functions. For instance,
    when $\varphi(z)=(1+A z)/(1+ B z)$, where $-1 \leq B < A \leq 1$, we obtain the classes of Janowski starlike functions $\mathcal{S}^*[A,B]$ and Janowski convex functions $\mathcal{C}[A,B]$ \cite{Jano}. For $A=1-2\alpha$ and $B=-1$, where $0\leq \alpha<1$, classes $\mathcal{S}^*[A,B]$ and $\mathcal{C}[A,B]$ reduce to the classes of  starlike functions of order $\alpha$, $\mathcal{S}^*(\alpha)$ and convex functions of order $\alpha$, $\mathcal{C}(\alpha)$ respectively \cite{good}. Classes $\mathcal{S}^*(\varphi)$ and $\mathcal{C}(\varphi)$ reduce to the class of strongly starlike functions $\mathcal{SS}^*(\alpha)$ and strongly convex functions $\mathcal{CC}(\alpha)$ for $\varphi(z)=((1+z)/(1-z))^{\alpha}$  \cite{good}. Various subclasses of $\mathcal{S}^*$ and $\mathcal{C}$ are considered for different choices of $\varphi(z)$ \cite{Goel,kamal}. For other subclasses and their corresponding class notation see \cite[Table 1]{cft}. We use these notations directly in the results.

    For the coefficient $\left\{a_k\right\}_{k\geq 2}$ of functions $f\in \mathcal{A}$, the Toeplitz matrix  is defined by  (see \cite{Hartman})
\begin{equation}\label{intil}
     T_{m,n}(f)= \begin{bmatrix}
	a_n & a_{n+1} & \cdots & a_{n+m-1} \\
	\bar{a}_{n+1} & a_n & \cdots & a_{n+m-2}\\
	\vdots & \vdots & \vdots & \vdots\\
    \bar{a}_{n+m-1} & \bar{a}_{n+m-2} & \cdots & a_n\\
	\end{bmatrix}.
\end{equation}
     The Toeplitz matrix $T_{m,n}(f)$ is Hermitian-Toeplitz matrix when $a_n$ is real as $T_{m,n}(f)=\overline{(T_{m,n}(f))^T}$ and determinants of Hermitian matrices are real numbers.  It is also noted that the determinant of $T_{m,1}(f)$ is rotationally invariant that is determinant of $T_{m,1}(f)$ and $T_{m,1}(f_\theta)$ are same, where $f_\theta=e^{-i \theta}f(e^{i\theta }z)$ and $\theta\in \mathbb{R}$. Since for $n=1$ and $f\in \mathcal{A}$, $a_1=1$. Thus,
   the second order Hermitian-Toeplitz determinant is
  $$ \det{T_{2,1}(f)}=1-\lvert a_2 \rvert^2$$
   and the third order Hermitian-Toeplitz determinant is given by
\begin{equation}\label{T31}
    \det T_{3,1}(f)=
     \begin{vmatrix}
     1 & a_{2} & a_{3} \\
	 \bar{a}_{2} & 1 & a_{2}\\
	 \bar{a}_3 & \bar{a}_{2} & 1\\
     \end{vmatrix}
      = 2 \RE \left(a_2^{2} \bar{a}_3\right)- 2\lvert a_2\rvert^2- \lvert a_3\rvert^2+1.
\end{equation}
   Toeplitz and Hankel matrices are closely related to each other. Toeplitz matrices contain constant entries along their diagonals whereas Hankel matrices contain constant entries along their reverse diagonals. Ye and Lim \cite{LHLIM} showed that any $n \times n$ matrix over $\mathbb{C}$ generically can be written as the product of some Toeplitz matrices or Hankel matrices. Toeplitz matrices and Toeplitz determinants have numerous applications in the field of pure as well as applied mathematics. They arise in partial differential equations, algebra, signal processing and time series analysis. For more applications of Toeplitz matrices and Toeplitz determinants, we refer \cite{LHLIM} and the references cited therein.

   Finding the upper and lower bound of Hermitian-Toeplitz determinants for initial values of $m$ and $n$ is now in trend in GFT. Cudna et al. \cite{cudna} determined sharp lower and upper estimate for $\det T_{2,1}(f)$ and $\det T_{3,1}(f)$ for the functions belonging to the classes of starlike and convex functions of order $\alpha$, $0\leq \alpha<1$. Kumar et al. \cite{Vkumar2} generalized this work  and obtained the bound for the classes of Janowski starlike and Janowski convex functions. Obradovi\'{c} and Tuneski \cite{tuneski} derived the bounds for the class $\mathcal{S}$ and some of its subclasses.
    Recently, Kumar \cite{Vkumar3} obtained the sharp lower and upper bound of $\det{T}_{2,1}(f)$ and $\det{T}_{3,1}(f)$ for the classes $\mathcal{F}_1$, $\mathcal{F}_2$, $\mathcal{F}_3$ and $\mathcal{F}_4$. Kowalczyk\ et al. \cite{B.Kowa} also attained the bounds for the classes $\mathcal{F}_2$ and $\mathcal{F}_3$. For more work in this direction one can see \cite{ahuja,MFALI,Cho,Lecko,Lecko2}.

    For different choices of $\varphi$ in $\mathcal{S}^*(\varphi)$ and $\mathcal{C}{(\varphi)}$, many authors obtained upper as well as lower bounds of Hermitian-Toeplitz determinants whereas for general $\varphi$ it is still open. Here, we solve this problem. Further, in case of the class $\mathcal{K}$ bounds of $\det{T_{2,1}(f)}$ and $\det{T_{3,1}(f)}$ are established for certain $g \in \mathcal{S}^*.$
   We use the following lemma as preliminaries.
\begin{lemma}\label{lemma1}\cite[Lemma 3]{Libera}
    If $p(z)=1 +\sum_{n=1}^\infty p_n z^n \in \mathcal{P}$, then
    $$ 2 p_2=p_1^2  +  \left(4 -p_1^2\right)\zeta,$$
    for some $\zeta\in \mathbb{\bar{D}}.$
\end{lemma}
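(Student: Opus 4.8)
The plan is to reduce the identity to the classical sharp bound for the second Taylor coefficient of a Schwarz function. Since $p\in\mathcal{P}$, the map
\[
\omega(z)=\frac{p(z)-1}{p(z)+1}
\]
is analytic on $\mathbb{D}$ with $\omega(0)=0$ and $|\omega(z)|<1$; write $\omega(z)=c_1 z+c_2 z^2+\cdots$. Inverting gives $p(z)=(1+\omega(z))/(1-\omega(z))=1+2\omega(z)+2\omega(z)^2+\cdots$, and comparing coefficients yields $p_1=2c_1$ and $p_2=2c_2+2c_1^2$. Hence the whole statement reduces to controlling $c_2$ in terms of $c_1$.

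First I would note that $|c_1|\le 1$ by the Schwarz lemma applied to $\omega$. The crux is to produce $\zeta\in\overline{\mathbb{D}}$ with $c_2=(1-|c_1|^2)\zeta$. If $|c_1|=1$, the equality case of the Schwarz lemma forces $\omega(z)=c_1 z$, so $c_2=0$ and the relation holds for any $\zeta$. If $|c_1|<1$, I would apply one step of the Schur algorithm: the function $g(z)=\omega(z)/z$ maps $\mathbb{D}$ into $\overline{\mathbb{D}}$ with $g(0)=c_1$, so
\[
h(z)=\frac{1}{z}\cdot\frac{g(z)-c_1}{1-\bar{c}_1\,g(z)}
\]
is again a self-map of $\mathbb{D}$ into $\overline{\mathbb{D}}$: it is analytic, vanishes at the origin because $g(0)=c_1$, and is bounded by $1$ by Schwarz. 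A short expansion gives $h(0)=c_2/(1-|c_1|^2)$, and $|h(0)|\le 1$ is precisely $c_2=(1-|c_1|^2)\zeta$ with $\zeta=h(0)\in\overline{\mathbb{D}}$.

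Substituting back and using $c_1=p_1/2$, I would reach
\[
2p_2=4c_2+4c_1^2=4(1-|c_1|^2)\zeta+4c_1^2=p_1^2+(4-|p_1|^2)\zeta.
\]
The lemma is invoked in the normalized situation where $p_1$ is real; indeed $\mathcal{P}$ is preserved under $p(z)\mapsto p(e^{i\theta}z)$, so one may rotate to arrange $p_1\in[0,2]$. Then $|p_1|^2=p_1^2$, and the displayed identity becomes $2p_2=p_1^2+(4-p_1^2)\zeta$, as asserted.

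I expect the main obstacle to be the middle step, namely establishing $c_2=(1-|c_1|^2)\zeta$; verifying that the M\"obius-composed map $h$ is genuinely a Schwarz function, together with the degenerate case $|c_1|=1$, is where the real work lies, whereas the coefficient comparison and the final substitution are routine. As an alternative I would keep the Herglotz representation $p(z)=\int_{|x|=1}\frac{1+xz}{1-xz}\,d\mu(x)$ in reserve, which gives $p_n=2\int x^n\,d\mu(x)$ and recasts the claim as a statement about the Carath\'eodory--Toeplitz moment body; this trades the Schur-algorithm computation for an argument on positive-semidefinite Toeplitz forms.
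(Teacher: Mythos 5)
The paper itself offers no proof of this lemma: it is quoted verbatim from Libera and Z\l otkiewicz \cite[Lemma 3]{Libera}, whose original argument runs through the Carath\'eodory--Toeplitz positivity conditions for the coefficient body of $\mathcal{P}$ --- essentially the Herglotz-representation route you kept in reserve. Your proof is therefore a genuinely different, self-contained derivation, and it is correct in substance: the coefficient identities $p_1=2c_1$, $p_2=2c_2+2c_1^2$ are right; the one-step Schur argument is sound (the map $\phi(z)=(g(z)-c_1)/(1-\bar{c}_1 g(z))$ is analytic with $\phi(0)=0$, so $h=\phi(z)/z$ is a well-defined self-map by the Schwarz lemma, and your computation $h(0)=c_2/(1-|c_1|^2)$ checks out); the degenerate case $|c_1|=1$ is properly isolated. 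One caveat deserves attention: what you actually prove is $2p_2=p_1^2+(4-|p_1|^2)\zeta$, and your rotation remark establishes the stated form only when $p_1\in[0,2]$, whereas the lemma as written allows complex $p_1$ with the factor $4-p_1^2$. This is not a real obstruction, but it needs one more line rather than an appeal to normalization: since $|4-p_1^2|\ge 4-|p_1|^2$, the relation $2p_2-p_1^2=(4-|p_1|^2)\zeta$ can always be rewritten as $2p_2-p_1^2=(4-p_1^2)\zeta'$ with $\zeta'\in\overline{\mathbb{D}}$ (and if $p_1^2=4$, then $|p_1|=2$ forces $2p_2=p_1^2$, so any $\zeta'$ serves). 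Indeed your version with $4-|p_1|^2$ is the sharper statement, and it is the normalized real case that the paper actually invokes in Theorems \ref{Tlowerb} and \ref{LowerHC}. As for what each approach buys: your Schwarz--Pick/Schur route is elementary and makes the extremal configurations transparent, while the Toeplitz-form route of Libera--Z\l otkiewicz extends systematically to parametrizations of $p_3$ and $p_4$, which is why it dominates the coefficient-functional literature.
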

\section{Main Results}
     In the following result, we obtain the bounds of $\det{T_{2,1}(f)}$ and $\det{ T_{3,1}(f)}$ for the class $\mathcal{S}^*(\varphi)$.
\begin{theorem}\label{TUB}
     Let $f\in \mathcal{S}^*(\varphi)$ and $\varphi(z)=1+B_1 z+ B_2 z^2 + B_3 z^3 +\cdots$, then the following holds:
\begin{enumerate}
\item   $1- B_1^2 \leq \det{T_{2,1}(f)}\leq 1 .$
\item   If $3 B_1^4  - 8 B_1^2  + 2 B_1^2 B_2 - B_2^2  < 0$ and $B_1 \leq \lvert B_2 + B_1^2 \rvert$, then
\begin{equation}\label{ninq}
    \det{ T_{3,1}(f)} \leq 1.
\end{equation}
\item If $3 B_1^4  - 8 B_1^2  + 2 B_1^2 B_2 - B_2^2 \geq 0$ and $B_1 \leq \lvert B_2 + B_1^2 \rvert$, then
\begin{equation}\label{UB}
       \det{ T_{3,1}(f)} \leq B_1^2(B_1^2 +B_2) -\frac{1}{4}\left( B_1^2+B_2 \right)^2 -2 B_1^2 + 1.
\end{equation}
\end{enumerate}
    All these inequalities are sharp.
\end{theorem}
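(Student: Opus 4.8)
The plan is to realize the subordination $\RE$-free condition $zf'(z)/f(z)\prec\varphi(z)$ through a Carath\'eodory function and then reduce both determinants to a one-parameter optimization. First I would write $zf'(z)/f(z)=\varphi(\omega(z))$ with $\omega$ a Schwarz function, set $\omega=(p-1)/(p+1)$ for $p(z)=1+p_1z+p_2z^2+\cdots\in\mathcal{P}$, and expand. Comparing coefficients in $zf'(z)=f(z)\varphi(\omega(z))$ gives $a_2=B_1p_1/2$ and a formula for $a_3$ that is linear in $p_2$ and quadratic in $p_1$. Part (1) is then immediate: since $|p_1|\le 2$ we have $0\le|a_2|^2\le B_1^2$, so $1-B_1^2\le\det T_{2,1}(f)=1-|a_2|^2\le 1$, with equality at $|p_1|=2$ (resp.\ $p_1=0$).

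For parts (2)--(3) I would exploit the rotational invariance of $\det T_{3,1}$: replacing $f$ by $f_\theta$ rotates $p_1\mapsto p_1e^{i\theta}$, so I may assume $p_1=p\in[0,2]$ is real. Lemma~\ref{lemma1} then writes $2p_2=p^2+(4-p^2)\zeta$ with $\zeta\in\bar{\mathbb{D}}$, and substituting yields $a_2=B_1p/2$ (real) and $a_3=\tfrac{p^2(B_1^2+B_2)}{8}+\tfrac{B_1(4-p^2)}{8}\zeta$. The key simplification I would use is the identity $\det T_{3,1}(f)=(1-|a_2|^2)^2-|a_3-a_2^2|^2$, obtained from (\ref{T31}) by completing the square in $a_3$. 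With the formulas above, $a_3-a_2^2=\tfrac18\big(p^2(B_2-B_1^2)+B_1(4-p^2)\zeta\big)$, so maximizing $\det T_{3,1}$ amounts to minimizing $|a_3-a_2^2|$ over the disk $\{\,c+d\zeta:|\zeta|\le 1\,\}$ with $c=p^2(B_2-B_1^2)$ and $d=B_1(4-p^2)\ge 0$.

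This inner minimization is a distance-to-origin computation: the minimum is $0$ when $d\ge|c|$ (i.e.\ $p\le p_0$ for an explicit threshold $p_0$) and equals $\tfrac1{64}\big(p^2|B_1^2-B_2|-B_1(4-p^2)\big)^2$ when $d\le|c|$. On $[0,p_0]$ this leaves $\det T_{3,1}=(1-B_1^2p^2/4)^2$, which is controlled by its endpoint values; on $[p_0,2]$ it leaves a quartic in $p$, i.e.\ a quadratic in $u=p^2$. Evaluating at $p=2$ (where the $\zeta$-term vanishes) gives exactly the right-hand side of (\ref{UB}), and a direct computation shows this endpoint value minus $1$ equals $\tfrac14\big(3B_1^4-8B_1^2+2B_1^2B_2-B_2^2\big)$. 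Thus the sign hypothesis decides whether the global maximum is $1$ (part (2)) or the value in (\ref{UB}) (part (3)).

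The hard part, and where the remaining hypothesis $B_1\le|B_2+B_1^2|$ is consumed, is showing that the maximum of the boundary-regime quartic on $[p_0,2]$ is attained at an endpoint rather than at an interior critical point: this requires examining the sign of the derivative of the quadratic in $u$ and excluding an interior vertex, and it is precisely $B_1\le|B_2+B_1^2|$ (together with the sign condition) that forces the extremum to $p\in\{0,2\}$. Finally I would record sharpness: $p=0$ (the function $z$) attains $\det T_{3,1}=1$, while $p=2$ corresponds to the Ma--Minda extremal $k_\varphi$ with $zk_\varphi'/k_\varphi=\varphi$, whose coefficients $a_2=B_1$, $a_3=(B_1^2+B_2)/2$ reproduce the bound in (\ref{UB}).
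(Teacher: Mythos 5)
Your proposal takes a genuinely different route from the paper's, and most of it is sound, but the decisive step is missing. For comparison: the paper does not parametrize at all for the upper bounds; it quotes the sharp Ma--Minda estimates $\lvert a_2\rvert\le B_1$ and $\lvert a_3\rvert\le\tfrac12\lvert B_1^2+B_2\rvert$ from \cite{ahuja} --- and quoting the second estimate is the \emph{only} place the hypothesis $B_1\le\lvert B_2+B_1^2\rvert$ enters --- then applies $\RE(a_2^2\bar a_3)\le\lvert a_2\rvert^2\lvert a_3\rvert$ in (\ref{T31}) and maximizes the concave quadratic $u(x)=2\lvert a_2\rvert^2x-2\lvert a_2\rvert^2-x^2+1$. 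You instead use the Carath\'eodory parametrization (which the paper reserves for the lower bounds, Theorem \ref{Tlowerb}) together with the identity $\det T_{3,1}(f)=(1-\lvert a_2\rvert^2)^2-\lvert a_3-a_2^2\rvert^2$. That identity is correct, as are your formulas $a_3=\tfrac18\bigl((B_1^2+B_2)p^2+B_1(4-p^2)\zeta\bigr)$ and $a_3-a_2^2=\tfrac18\bigl(p^2(B_2-B_1^2)+B_1(4-p^2)\zeta\bigr)$, the distance-to-origin reduction, the evaluation at $p=2$ giving the right-hand side of (\ref{UB}), and the observation that this value minus $1$ equals $\tfrac14(3B_1^4-8B_1^2+2B_1^2B_2-B_2^2)$; part (1) and the sharpness functions are also fine.

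The gap is in parts (2)--(3): the step you yourself call ``the hard part'' --- showing the maximum over the two regimes sits at $p\in\{0,2\}$ --- is only asserted, and the assertion that $B_1\le\lvert B_2+B_1^2\rvert$ is ``precisely'' what forces this cannot be right. After your reduction, the function to be maximized depends on the data only through $B_1$ and $\lvert B_1^2-B_2\rvert$: with $x=p^2$, $m=B_1+\lvert B_1^2-B_2\rvert$ and $x_1=p_0^2=4B_1/m$, it is $(1-\tfrac{B_1^2}{4}x)^2$ on $[0,x_1]$ and $(1-\tfrac{B_1^2}{4}x)^2-\tfrac1{64}(mx-4B_1)^2$ on $[x_1,4]$. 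The hypothesis $B_1\le\lvert B_2+B_1^2\rvert$ is not a function of $(B_1,\lvert B_1^2-B_2\rvert)$, so it cannot decide where that maximum lies. What actually closes your argument are the Carath\'eodory bounds on the coefficients of $\varphi$ (which the paper also records: $B_1\in[0,2]$, $B_2\in[-2,2]$): an interior maximum on $[x_1,4]$ would require the quadratic to open downward \emph{and} have positive slope at $x_1$, i.e.\ $2B_1^2<m<B_1^3$, forcing $B_1>2$, impossible; and the junction value $(1-B_1^3/m)^2$ never exceeds $1$, since for $B_1\le\sqrt2$ one has $B_1^3\le 2B_1\le 2m$, while for $B_1\ge\sqrt2$ the bound $B_2\le2$ gives $2m-B_1^3\ge 2B_1+2B_1^2-4-B_1^3=(2-B_1)(B_1^2-2)\ge0$. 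None of this appears in your write-up, so the proof is incomplete exactly at its delicate point. (Ironically, once these checks are supplied, your method yields the upper bounds \emph{without} the hypothesis $B_1\le\lvert B_2+B_1^2\rvert$, which the paper needs only in order to quote its $\lvert a_3\rvert$ bound.)
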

\begin{proof}
    Let $f(z)= z+ \sum_{n=2}^\infty a_n z^n \in \mathcal{S}^*(\varphi)$, then $\lvert a_2\rvert\leq B_1$ and whenever $B_1 \leq \lvert B_2 + B_1^2 \rvert$, we have
    $$ \lvert a_3\rvert\leq \frac{1}{2}\lvert  B_1^2+B_2\rvert$$
    (see \cite{ahuja}). It is clear that, the lower and the upper estimates of $\det{T_{2,1}(f)}$  will be deduced directly from the bound of $\lvert a_2\rvert$, which are sharp for the functions $f_1(z)$ and $f_2(z)$ respectively, given by
\begin{equation}\label{extremalf1}
     f_1(z)=z\exp\int_0^z \frac{\varphi(t)-1}{t} dt \;\;  \text{and} \; \;   f_2(z)=z\exp\int_0^z \frac{\varphi(t^2)-1}{t} dt.
\end{equation}
    Now we proceed to estimate $\det{T_{3,1}(f)}$. Using the inequality $\RE (a_2^2 \bar{a}_3 )\leq \lvert a_2\rvert^2 \lvert a_3\rvert$ in (\ref{T31}), we have
    $$\det{T_{3,1}(f)}\leq 2 \lvert a_2\rvert^2 \lvert a_3\rvert- 2 \lvert a_2 \rvert^2-\lvert a_3\rvert^2+1=:u(\lvert a_3\rvert),$$
    where $u(x)=2  \lvert a_2\rvert^2 x- 2 \lvert a_2 \rvert^2- x^2 +1.$ Clearly, maximum of $u(x)$ is the upper bound for $\det{T_{3,1}(f)}$. Since $B_1\in [0,2]$ and $B_2\in [-2,2]$, we have $x=\lvert a_3\rvert\in [0,3]$. Since,
    $$u'(x)=2 \left( \lvert a_2\rvert^2 -x \right)\; \;  \text{and} \; \; u''(x)=-2,$$
     $u(x)$ attains its maximum value at $x_0=\lvert a_2\rvert^2$. Now, there arises two cases;\\
    {\bf{Case 1:}} when $\lvert a_2 \rvert^2$ lies in the range of $x$ that is $\lvert a_2\rvert^2 <  \lvert a_3 \rvert $,
     then
\begin{align*}
    \max{u(x)}=u(\lvert a_2\rvert^2)&=\left(\lvert a_2\rvert^2-1 \right)^2 \\
                                    & \leq \left\{
                                    \begin{array}{ll}
                                       1  &  \lvert a_2\rvert^2\leq 2,\\
                                       \left(\frac{1}{2}\left(B_1^2 +B_2 \right) -1\right)^2   &2\leq \lvert a_2\rvert^2 \leq \frac{1}{2}\lvert B_1^2+B_2\rvert.
                                    \end{array}
                                      \right. \\
                                    &= \left\{
                                    \begin{array}{ll}
                                       1  &  \lvert B_1^2+B_2\rvert \leq 4,\\
                                       \left(\frac{1}{2}\left(B_1^2 +B_2 \right) -1\right)^2   &4 \leq \lvert B_1^2+B_2\rvert.
                                    \end{array}
                                      \right.
\end{align*}
{\bf{Case 2:}} If $\lvert a_3 \rvert \leq \lvert a_2\rvert^2 ,$ 
    then
\begin{align*}
            \max{u(x)}&=u(\lvert a_3\rvert)\\
                                   &= u\left(\frac{1}{2}\lvert B_1^2 +B_2\rvert \right)\leq  B_1^2(B_1^2 +B_2) -\frac{1}{4}\left( B_1^2+B_2 \right)^2 -2 B_1^2 + 1.
\end{align*}
   It can be easily seen that for $\lvert B_1^2 +B_2 \rvert \geq 4$
\begin{align*}
   \max\bigg\{\left(\frac{1}{2}\left(B_1^2 +B_2 \right) -1\right)^2&,  B_1^2(B_1^2 +B_2) -\frac{1}{4}\left( B_1^2+B_2 \right)^2 -2 B_1^2 + 1\bigg\} \\
                                                                               &=   B_1^2(B_1^2 +B_2) -\frac{1}{4}\left( B_1^2+B_2 \right)^2 -2 B_1^2 + 1.
\end{align*}
   Also,
   $$ B_1^2(B_1^2 +B_2) -\frac{1}{4}\left( B_1^2+B_2 \right)^2 -2 B_1^2 + 1\geq 1 $$
   whenever
   $$  3 B_1^4  - 8 B_1^2  + 2 B_1^2 B_2 - B_2^2  \geq 0.$$
   Combining all these facts, we obtain the upper bound of $\det{T_{3,1}(f)}$.

   \textbf{Sharpness:}
 When $3 B_1^4  - 8 B_1^2  + 2 B_1^2 B_2 - B_2^2  < 0$, the inequality (\ref{ninq}) is sharp for  $f(z)=z$ and when $3 B_1^4  - 8 B_1^2  + 2 B_1^2 B_2 - B_2^2  \geq 0$, the equality sign in (\ref{UB}) holds for the function $f_1(z)$ given by (\ref{extremalf1}).
\end{proof}
     For $\varphi(z)=1+\sin{z}$, the class $\mathcal{S}^*(\varphi)$ reduces to the class $\mathcal{S}^*_{sin}$. From Theorem \ref{TUB}, we easily obtain the following result for the class $\mathcal{S}^*_{sin}$. Similarly, results of other classes are also derived.
\begin{corollary}
\begin{enumerate}
  \item If $f \in \mathcal{S}^*_{sin}$, then $ 0 \leq \det{T_{2,1}(f)}\leq 1$ and $\det{T_{3,1}(f)}\leq 1$.
  \item If $f \in \mathcal{S}^*_\varrho$, then $ 0 \leq \det{T_{2,1}(f)}\leq 1$  and $\det{T_{3,1}(f)}\leq 1$.
   \item If  $f \in \mathcal{S}_P$, then $1-(64/\pi^4) \leq \det{T_{2,1}(f)}\leq 1$ and $\det{T_{3,1}(f)}\leq 1.$
\end{enumerate}
\end{corollary}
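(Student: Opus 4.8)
The plan is to recognize each of the three classes as a specialization of $\mathcal{S}^*(\varphi)$ and then feed the first two Taylor coefficients of the associated Ma--Minda function into Theorem~\ref{TUB}. Accordingly, the first step for each class is to expand $\varphi(z)=1+B_1 z+B_2 z^2+\cdots$ and record $B_1$ and $B_2$; since no coefficient beyond $B_2$ appears in any hypothesis of Theorem~\ref{TUB}, nothing further is needed. Once $B_1$ is known the second-order estimate is automatic, Theorem~\ref{TUB}(1) giving $1-B_1^2\le\det T_{2,1}(f)\le 1$, so the only real verification concerns the third-order determinant, where I must check the side condition $B_1\le|B_2+B_1^2|$ and determine the sign of $3B_1^4-8B_1^2+2B_1^2B_2-B_2^2$ to decide between parts (2) and (3).

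For $\mathcal{S}^*_{sin}$ we have $\varphi(z)=1+\sin z=1+z-\tfrac{1}{6}z^3+\cdots$, so $B_1=1$ and $B_2=0$; the second-order bound collapses to $0\le\det T_{2,1}(f)\le1$, the side condition reads $1\le1$, and the sign quantity equals $3-8=-5<0$. We therefore land in Theorem~\ref{TUB}(2) and obtain $\det T_{3,1}(f)\le1$. For $\mathcal{S}^*_\varrho$ I would take the corresponding $\varphi$ from \cite[Table~1]{cft}; the stated lower bound $0=1-B_1^2$ forces $B_1=1$, and the same two checks (the side inequality, then a negative value of the sign quantity) place this class in part (2) as well, yielding $\det T_{3,1}(f)\le1$.

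The computationally delicate case is $\mathcal{S}_P$, whose generating function is $\varphi(z)=1+\tfrac{2}{\pi^2}\bigl(\log\tfrac{1+\sqrt z}{1-\sqrt z}\bigr)^2$. I expect the main obstacle to be extracting $B_1$ and $B_2$ from this composition of logarithm, square root, and squaring. The clean route is to write $\log\tfrac{1+\sqrt z}{1-\sqrt z}=2\bigl(\sqrt z+\tfrac13 z^{3/2}+\cdots\bigr)$, after which squaring cancels the half-integer powers and yields $\bigl(\log\tfrac{1+\sqrt z}{1-\sqrt z}\bigr)^2=4z+\tfrac83 z^2+\cdots$, so that $B_1=8/\pi^2$ and $B_2=16/(3\pi^2)$. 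These immediately give the stated lower bound $1-B_1^2=1-64/\pi^4$. Finally, since $B_1\approx0.81$ and $B_2\approx0.54$, one checks numerically that $B_1\le|B_2+B_1^2|$ holds and that $3B_1^4-8B_1^2+2B_1^2B_2-B_2^2<0$, so Theorem~\ref{TUB}(2) once more delivers $\det T_{3,1}(f)\le1$ and all three parts are established.
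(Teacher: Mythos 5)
Your proposal is correct and follows exactly the paper's route: the paper offers no separate proof for this corollary beyond the remark that each class is $\mathcal{S}^*(\varphi)$ for the indicated $\varphi$ (sine, the function from the cited table, and the parabolic one), so the result drops out of Theorem~\ref{TUB} once $B_1$, $B_2$ are read off and the side conditions checked --- which you do correctly, including the expansion $\varphi(z)=1+\tfrac{8}{\pi^2}z+\tfrac{16}{3\pi^2}z^2+\cdots$ for $\mathcal{S}_P$. The only cosmetic weakness is that for $\mathcal{S}^*_\varrho$ you infer $B_1=1$ from the stated bound rather than from the defining function $\varphi(z)=1+\sinh^{-1}z$ (which indeed gives $B_1=1$, $B_2=0$, so your checks go through verbatim as in the sine case).
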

\begin{remark}   
   Theorem \ref{TUB} yields some already known results for different subclasses of $\mathcal{S}^*$, obtained by an appropriate choice of $\varphi.$
\begin{enumerate}
    \item If $f\in \mathcal{S}^*_{SG}$, then $ 3/4 \leq \det{T_{2,1}(f)}\leq 1$ and $\det{T_{3,1}(f)}\leq 1$ \cite[Theorem 2.1]{Cho}.
    \item If $f\in \mathcal{S}^*_{B}$, then $ 0 \leq \det{T_{2,1}(f)}\leq 1$ and $\det{T_{3,1}(f)}\leq 1$ \cite[Theorem 2.2]{Cho}.
    \item If $f\in \Delta^*$, then $\det{T_{3,1}(f)}\leq 1$ \cite[Theorem 2]{Vkumar}.
    \item If $f\in \mathcal{S}^*$, then $\det{T_{3,1}(f)}\leq 8$ \cite[Corollary 3]{cudna}, \cite[Corollary 2]{B.Kowa}.
    \item If  $f\in \mathcal{S}^*(1/2)$, then $\det{T_{3,1}(f)}\leq 1$ \cite[Corollary 4]{cudna}.
    \item If  $f\in \mathcal{S}^*_{Ne}$, then $\det{T_{3,1}(f)}\leq 1$ \cite[Theorem 4.2]{Sushil}.
    \item If we take $\varphi(z)=(1+(1-2 \alpha)z)/(1-z), \alpha\in (0,1]$, then we obtain the upper bound of $\det{T}_{3,1}(f)$ for $f\in \mathcal{S}^*(\alpha)$ \cite[Theorem 3]{cudna}.
    \item For $\varphi(z)=((1+z)/(1-z))^\alpha, \alpha\in [1/3,1]$, we get the bound of $T_{3,1}(f)$ for function $f$ belonging to the class of strongly starlike function $\mathcal{SS}^*(\alpha)$ \cite[Theorem 1]{Vkumar}, \cite[Theorem 3]{B.Kowa}.
    \item For $\varphi(z)=(1+A z)/(1+B z)$, where $-1\leq B< A \leq 1$ with $A - B \leq \lvert A^2 - 3 A B + 2 B^2 \rvert $, we get the bound of $T_{3,1}(f)$ for the class $\mathcal{S}^*[A,B]$ \cite[Theorem 2]{Vkumar2}.
\end{enumerate}
\end{remark}
  In a similar fashion, the bounds of $\det{T_{2,1}(f)}$ can also be found for all the above mentioned classes.
\begin{theorem}\label{TKUB}
    If $f\in \mathcal{C}(\varphi)$ and $\varphi(z)=1+B_1 z+ B_2 z^2 +B_3 z^3+\cdots$, then the following hold:
\begin{enumerate}
  \item  $ 1- \frac{B_1^2}{4} \leq \det{T_{2,1}(f)}\leq 1 .$
  \item If $B_1 \leq \lvert B_2 + B_1^2 \rvert$, then
\begin{equation}\label{KUB}
      \det{T_{3,1}(f)} \leq 1.
\end{equation}
\end{enumerate}
   All these estimates are sharp.
\end{theorem}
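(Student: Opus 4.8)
The plan is to reduce everything to the starlike case already settled in Theorem~\ref{TUB} via Alexander's theorem. If $f(z)=z+\sum_{n\ge 2}a_n z^n\in\mathcal{C}(\varphi)$, then $g(z):=zf'(z)=z+\sum_{n\ge 2}n a_n z^n\in\mathcal{S}^*(\varphi)$. Writing $g(z)=z+b_2 z^2+b_3 z^3+\cdots$ we have $b_n=n a_n$, so $a_2=b_2/2$ and $a_3=b_3/3$. The Ma--Minda coefficient estimates already recalled in the proof of Theorem~\ref{TUB} give $|b_2|\le B_1$ and, under the hypothesis $B_1\le|B_2+B_1^2|$, also $|b_3|\le\tfrac12|B_1^2+B_2|$. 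Hence
$$ |a_2|\le\frac{B_1}{2},\qquad |a_3|\le\frac{1}{6}\,|B_1^2+B_2|. $$

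Part (1) then drops out immediately, since $\det T_{2,1}(f)=1-|a_2|^2$ forces $1-\tfrac{B_1^2}{4}\le\det T_{2,1}(f)\le 1$. I would record the upper estimate as sharp at $f(z)=z$ (where $a_2=0$), and the lower estimate as sharp at the Alexander primitive of the extremal function $f_1$ of \eqref{extremalf1}, i.e.\ the $f\in\mathcal{C}(\varphi)$ determined by $zf'(z)=f_1(z)$, which realizes $|a_2|=B_1/2$.

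For part (2) I would mimic Theorem~\ref{TUB}: substituting the inequality $\RE(a_2^2\bar a_3)\le|a_2|^2|a_3|$ into \eqref{T31} yields
$$ \det T_{3,1}(f)\le u(|a_3|),\qquad u(x)=-x^2+2|a_2|^2 x-2|a_2|^2+1. $$
Completing the square gives the identity $u(x)=(|a_2|^2-1)^2-(x-|a_2|^2)^2$. The decisive point—and the reason the convex case needs no case split of the kind appearing in Theorem~\ref{TUB}—is that any admissible Ma--Minda function has $B_1\le 2$, whence $|a_2|^2\le B_1^2/4\le 1$ and therefore $(|a_2|^2-1)^2\le 1$. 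Consequently
$$ \det T_{3,1}(f)\le u(|a_3|)\le (|a_2|^2-1)^2\le 1, $$
which is \eqref{KUB}; sharpness is witnessed by $f(z)=z$.

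The only real obstacle is the bookkeeping of the first paragraph: one must apply Alexander's theorem correctly and transfer the Ma--Minda bounds on $b_2,b_3$ to $a_2,a_3$, keeping the hypothesis $B_1\le|B_2+B_1^2|$ in play exactly where the estimate for $|a_3|$ is invoked. Once $|a_2|\le B_1/2$ is in hand, the elementary observation that $B_1\le 2$ forces $|a_2|^2\le 1$ collapses the entire optimization of the starlike theorem into the single completion of the square above, so the remainder is routine.
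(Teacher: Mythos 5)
Your proposal is correct and takes essentially the same route as the paper: the identical coefficient bounds $\lvert a_2\rvert\le B_1/2$ and $\lvert a_3\rvert\le\tfrac16\lvert B_1^2+B_2\rvert$ (which the paper simply cites from the literature, while you rederive them from the starlike case via Alexander's theorem), followed by the same maximization of $u(x)=-x^2+2\lvert a_2\rvert^2x-2\lvert a_2\rvert^2+1$, whose peak value $(\lvert a_2\rvert^2-1)^2\le 1$ because $B_1\le 2$. Even your sharpness witnesses coincide with the paper's: the identity map for the upper bounds, and your Alexander primitive of $f_1$ is exactly the paper's extremal function $f_3$ defined by $1+zf_3''(z)/f_3'(z)=\varphi(z)$.
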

\begin{proof} Let  $f(z)= z+ \sum_{n=2}^\infty a_n z^n \in \mathcal{C}(\varphi)$, then $\lvert a_2\rvert\leq {B_1}/{2}$ and whenever $B_1 \leq \lvert B_2 + B_1^2 \rvert$ hold, we have
     $$ \lvert a_3\rvert\leq \frac{1}{6}\lvert B_1^2+B_2\rvert$$
     (see \cite{ahuja}). The bounds of $\det{T_{2,1}(f)}$ can be obtained using the bound of $\lvert a_2 \rvert$.
     Further, the functions $f_3$ and $f_4$ satisfying
\begin{equation*}\label{ExtC}
       1+\frac{z f_3''(z)}{f_3'(z)}=\varphi(z) \;\;  \text{and} \; \;   1+\frac{z f_4''(z)}{f_4'(z)}=\varphi(z^2)
\end{equation*}
     respectively, act as extremal functions for lower and upper estimates of $\det{T_{2,1}(f)}$.
     Now using the technique of Theorem \ref{TUB}, in context of the class $\mathcal{C}(\varphi)$, we get
      $$\det{T_{3,1}(f)}\leq u(x)=2  \lvert a_2\rvert^2 x- 2 \lvert a_2 \rvert^2- x^2 +1,$$
      where $x=\lvert a_3\rvert$.  Clearly, $u(x)$ attain its maximum value at $x=\lvert a_2\rvert^2$. Therefore,
\begin{align*}
    \max u(x)= g(\lvert a_2\rvert^2)
    =\left( \lvert a_2\rvert^2 -1 \right)^2
                                    \leq 1.
\end{align*}
    The bound is sharp for the identity function $f(z)=z$.
\end{proof}
     Theorem \ref{TKUB} establishes known results for different subclasses of $\mathcal{C}$ based on the choice of $\varphi$.
\begin{remark}
\begin{enumerate}
    \item If $\varphi(z)=(1+z)/(1-z)$, then $f\in \mathcal{C}$ and $\det{T_{3,1}(f)}\leq 1$ \cite[Theorem 1]{Lecko}.
    \item If $\varphi(z)=(1+(1-2\alpha)z)/(1-z)$, then $f\in \mathcal{C}(\alpha)$ and $\det{T_{3,1}(f)}\leq 1$ \cite[Theorem 5]{cudna}.
    \item If $\varphi(z)=((1+z)/(1-z))^{\alpha}$, $\alpha \in [1/3,1]$, then $f\in \mathcal{CC}(\alpha)$ and $\det{T_{3,1}(f)}\leq 1$ \cite[Theorem 5]{B.Kowa}.
    \item If $f\in \mathcal{C}[A,B]$, where $-1 \leq B < A\leq 1 $ and $A - B \leq \lvert A^2 - 3 A B + 2 B^2 \rvert $, then $\det{T_{3,1}(f)}\leq 1$ \cite[Theorem 4]{Vkumar2}.
\end{enumerate}
\end{remark}

\begin{theorem}\label{Tlowerb}
     Let $f \in \mathcal{S}^*(\varphi) $ and $B_1^2 \geq B_2$, then the following inequalities hold:
\begin{enumerate}
     \item If $ \mu \notin  [0,4]$, then
\begin{equation}\label{ffff}
     \det{T_{3,1}(f)} \geq \min\left\{ 1-\frac{B_1^2}{4}, 1 - 2 B_1^2 + \frac{3 B_1^4}{4} + \frac{B_1^2 B_2}{2} - \frac{B_2^2}{4}\right\}.
\end{equation}
     \item If $  \mu = 4$, then
\begin{equation}\label{lowerb4}
   \det{T_{3,1}(f)}\geq 1 - 2 B_1^2 + \frac{3 B_1^4}{4} + \frac{B_1^2 B_2}{2} - \frac{B_2^2}{4}.
\end{equation}\label{lowerb5}
   \item If $\mu \in (0,4)$, then
\begin{equation}\label{lowerb3}
     \det{T_{3,1}(f)}\geq  1 - \frac{B_1^2}{4}-\frac{B_1^2 (B_1^2 + 3 B_1 - B_2 )^2}{4 \left( B_1 ( 2 B_1^2 - B_1 - 2 B_2 )+ ( 3 B_1^2 - B_2 ) (B_1^2 + B_2 ) \right)},
\end{equation}
\end{enumerate}
   where $$\mu = \frac{4 B_1 ( B_1^2 + 3 B_1 - B_2 )}{( 3 B_1^2 - B_2 ) ( B_1^2 +B_2 )+ B_1 ( 2 B_1^2 - 2 B_2 - B_1 ) }.$$
   The first two inequalities are sharp.
\end{theorem}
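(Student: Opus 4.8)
The plan is to abandon the magnitude bounds $|a_2|\le B_1$, $|a_3|\le\tfrac12|B_1^2+B_2|$ that drove the upper estimate (they discard the sign information a lower bound needs) and instead work from an exact coefficient parametrization. Writing the subordination as $zf'(z)/f(z)=\varphi(\omega(z))$ with $\omega$ a Schwarz function and setting $p=(1+\omega)/(1-\omega)=1+p_1z+p_2z^2+\cdots\in\mathcal{P}$, a direct series comparison gives
\[ a_2=\tfrac12 B_1 p_1,\qquad a_3=\tfrac14 B_1 p_2+\tfrac18(B_1^2-B_1+B_2)p_1^2. \]
Since $\det T_{3,1}$ is rotationally invariant, I would rotate so that $p_1=c\in[0,2]$, and then invoke Lemma \ref{lemma1} to replace $p_2$ by $\tfrac12\big(c^2+(4-c^2)\zeta\big)$ with $\zeta\in\bar{\mathbb{D}}$. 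This makes $a_2=\tfrac12 B_1 c$ real and puts $a_3$ in the affine form $a_3=\alpha+\beta\zeta$ with $\alpha=\tfrac18 c^2(B_1^2+B_2)$ and $\beta=\tfrac18 B_1(4-c^2)\ge 0$.

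Substituting into $\det T_{3,1}(f)=2\RE(a_2^2\bar{a}_3)-2|a_2|^2-|a_3|^2+1$ (only $\RE\zeta$ and $|\zeta|^2$ survive, since $a_2^2$ is real) yields an expression of the shape
\[ \det T_{3,1}(f)=\Phi(c)+\tfrac14 c^2\beta(B_1^2-B_2)\,\RE\zeta-\beta^2|\zeta|^2, \]
where $\Phi(c)$ collects the terms free of $\zeta$. Here the hypothesis $B_1^2\ge B_2$ is precisely what forces the coefficient of $\RE\zeta$ to be nonnegative. The first genuine step is the inner minimization over $\zeta\in\bar{\mathbb{D}}$: the coefficient of $\RE\zeta$ is $\ge 0$ while that of $|\zeta|^2$ is $\le 0$, so the minimum over the disk is driven to the boundary point $\zeta=-1$. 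This collapses the two-variable problem to a one-variable one and, crucially, makes $a_3$ real at the extremum.

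With $\zeta=-1$ the determinant becomes a function $G$ of $v:=c^2\in[0,4]$; since $a_3=\alpha-\beta$ is affine in $v$, $G$ is a quadratic in $v$. I would record its endpoint values, $G(0)=1-B_1^2/4$ and $G(4)=1-2B_1^2+\tfrac34 B_1^4+\tfrac12 B_1^2 B_2-\tfrac14 B_2^2$, recognizing them as the two quantities in (\ref{ffff}) and (\ref{lowerb4}), and then locate the vertex, a short computation identifying it with the quantity $\mu$ of the statement. The case split is the standard one for minimizing a quadratic on an interval: if $\mu\notin[0,4]$ the minimum sits at an endpoint, giving $\min\{G(0),G(4)\}$, i.e.\ (\ref{ffff}); if $\mu=4$ it is the endpoint value $G(4)$, i.e.\ (\ref{lowerb4}); and if $\mu\in(0,4)$ it is the vertex value $G(\mu)$, which unwinds to (\ref{lowerb3}).

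For sharpness I would reuse the Ma--Minda extremal functions already introduced: $f_2$ (built from $\varphi(t^2)$) realizes $c=0$ and hence equality in the $1-B_1^2/4$ branch, while $f_1$ (built from $\varphi(t)$) realizes $c=2$ and hence equality in the $G(4)$ branch. The main obstacle I anticipate is not the inner $\zeta$-minimization, which is clean once $B_1^2\ge B_2$ is invoked, but controlling the sign of the leading coefficient $A=\tfrac1{64}(B_1^2+B_1+B_2)(3B_1^2-B_1-B_2)$ of $G$: the vertex value $G(\mu)$ is a true minimum only when $A>0$, so the delicate point is to verify, using $B_1\in(0,2]$, $B_2\in[-2,2]$ and $B_1^2\ge B_2$, that the regime $\mu\in(0,4)$ indeed comes with an upward-opening parabola. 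This is also the reason sharpness is asserted only for the two endpoint inequalities and not for the interior bound (\ref{lowerb3}).
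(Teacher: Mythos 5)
Your proposal is correct and follows essentially the same route as the paper's proof: the same Carath\'eodory parametrization $a_2=\tfrac12 B_1p_1$, $a_3=\tfrac18\big((B_1^2-B_1+B_2)p_1^2+2B_1p_2\big)$, the same use of Lemma \ref{lemma1} and rotation to $p_1=c\in[0,2]$, the same minimization over $\zeta$ at $\zeta=-1$ (the paper does it in two steps, $\RE\bar\zeta\mapsto-|\zeta|$ then $|\zeta|\mapsto 1$, both using $B_1^2\ge B_2$), the same quadratic $G$ on $[0,4]$ with vertex $\mu$, the same case split, and the same extremal functions $f_1,f_2$ for the endpoint bounds.

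The one point you flag but leave open — that $\mu\in(0,4)$ forces an upward-opening parabola — is settled in the paper by a structural observation, not by the parameter ranges you propose to use: the numerator $4B_1(B_1^2+3B_1-B_2)$ of $\mu$ is positive (since $B_1>0$ and $B_1^2\ge B_2$ give $B_1^2+3B_1-B_2\ge 3B_1>0$), while the denominator of $\mu$ is exactly $64A=32\,G''$, so $\mu$ and the leading coefficient $A$ have the same sign; hence $\mu>0$ already implies $A>0$, and conversely $A<0$ pushes the vertex to $\mu<0$, which is absorbed into the endpoint case. Be warned that the verification you sketch (deducing $A>0$ from $B_1\in(0,2]$, $B_2\in[-2,2]$, $B_1^2\ge B_2$ alone) would fail, since those hypotheses do not imply $A>0$: for instance $B_1=\tfrac1{10}$, $B_2=-2$ gives $A=\tfrac1{64}(B_1^2+B_1+B_2)(3B_1^2-B_1-B_2)<0$; the sign link between $\mu$ and $A$ is the fact you actually need.
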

\begin{proof}
       Since $f(z)= z+ \sum_{n=2}^\infty a_n z^n \in \mathcal{S}^*(\varphi)$, therefore
\begin{equation}\label{expansion}
        \frac{zf'(z)}{f(z)}=\varphi( \omega(z)),
\end{equation}
       where $\omega(z)$ is a Schwarz function satisfying $\lvert \omega(z) \rvert \leq \lvert z \rvert$ and $\omega(0)=0$. Corresponding to the function $\omega$, suppose there is some  Carath\'{e}odory function $p(z)=1+ \sum_{n=1}^\infty p_n z^n \in \mathcal{P}$, which satisfy $\omega(z)=(p(z)-1)/(p(z)+1)$. On comparing the coefficients of $z$ and $z^2$ in (\ref{expansion}) with the series expansion of $f$, $\omega$ and $\varphi$, we obtain
      $$ a_2 =\frac{B_1 p_1}{2} \;\; \text{and}\;\; a_3 =\frac{1}{8}\left( (B_1^2 -B_1 +B_2 )p_1^2 +2 B_1 p_2\right).$$
      Since the class $\mathcal{S}^*(\varphi)$ and the class of  Carath\'{e}odory functions $\mathcal{P}$ are invariant under rotation and $\lvert p_1\rvert\leq 2$, without lose of generality, we can take $p_1\in [0,2]$.  Using the values of $a_2$, $a_3$ with Lemma \ref{lemma1}, we have
      $$ 2 \RE \left( a_2^2 \bar{a}_3 \right)=\frac{B_1^2 p_1^2}{16} \left(( B_1^2 -B_1 +B_2 )p_1^2+ B_1 \left(p_1^2 +(4-p_1^2) \RE{\bar{\zeta}} \right) \right) $$
      and
\begin{align*}
     -\lvert a_3\rvert^2 =& -\frac{1}{64}\bigg( ( B_1^2 - B_1 +B_2 )^2 p_1^4 +B_1^2 \left( p_1^4 + ( 4-p_1^2 )^2 \lvert \zeta\rvert^2 +2 p_1^2 (4-p_1^2) \RE{\bar{\zeta}}\right) \\
                       &+2 B_1 (B_1^2 -B_1 +B_2 ) p_1^2 \left( p_1^2 + (4-p_1^2 ) \RE{\bar{\zeta}} \right) \bigg).
\end{align*}
      Now, by (\ref{T31})
\begin{align*}
    \det{T_{3,1}(f)}=& -\frac{1}{64}\bigg( ( B_1^2 - B_1 +B_2 )^2 p_1^4 +B_1^2 \left( p_1^4 +( 4-p_1^2 )^2 \lvert \zeta\rvert^2 +2 p_1^2 (4-p_1^2 ) \RE{\bar{\zeta}}\right) \\
                &+2 B_1 (B_1^2 -B_1 +B_2 ) p_1^2 \left( p_1^2 + (4-p_1^2 ) \RE{\bar{\zeta}} \right) \bigg)-\frac{B_1^2 p_1^2 }{2}+1\\
                &+\frac{B_1^2 p_1^2}{16} \bigg(( B_1^2 -B_1 +B_2 )p_1^2+ B_1 \left(p_1^2 +(4-p_1^2 ) \RE{\bar{\zeta}} \right) \bigg).
\end{align*}
    We can rewrite the above equation as
\begin{align*}
    \det{T_{3,1}(f)}=&\left(\frac{3 B_1^4 +2 B_1^2 B_2 -B_2^2}{64}\right)p_1^4 -\frac{B_1^2}{2}p_1^2 -\frac{B_1^2}{64} (4-p_1^2 )^2 \lvert \zeta\rvert^2 \\                            &+\left(\frac{B_1^3- B_1 B_2}{32}\right) p_1^2 (4-p_1^2)\RE{\bar{\zeta}}+1 =:F(p_1^2, \lvert \zeta\rvert,\RE{\bar{\zeta}}).
\end{align*}
     Note that $F(p_1^2, \lvert \zeta\rvert,\RE{\bar{\zeta}})\geq F(p_1^2, \lvert \zeta\rvert,-\lvert \zeta\rvert)$, therefore
\begin{align*}
\det{T_{3,1}(f)}\geq &\left(\frac{3 B_1^4 +2 B_1^2 B_2 -B_2^2}{64}\right)p_1^4 -\frac{B_1^2}{2}p_1^2 -\frac{B_1^2}{64} \left(4-p_1^2 \right)^2 \lvert \zeta\rvert^2 \\                            & -\left(\frac{B_1^3- B_1 B_2}{32}\right) p_1^2 (4-p_1^2) \lvert \zeta\rvert+1 .
\end{align*}
     Let $x=p_1^2\in [0,4]$ and $y=\lvert \zeta\rvert\in [0,1]$, then $\det{T_{3,1}(f)} \geq  F(x,y)$, where
\begin{align*}
     F(x,y)=\left(\frac{3 B_1^4 +2 B_1^2 B_2 -B_2^2}{64}\right) x^2 -\frac{B_1^2}{2}x -\frac{B_1 (B_1^2 -B_2)}{32}&x \left(4-x \right)y  +1\\
                           &-\frac{B_1^2}{64}\left(4-x \right)^2 y^2.
\end{align*}
    If $B_1^2 \geq B_2$, then for any fixed $x$ and $y\in [0,1]$
     $$ \frac{\partial F}{\partial y}= -\frac{B_1 (B_1^2 -B_2)}{32} x (4-x) -\frac{B_1^2}{32} (4-x)^2 y   \leq 0, $$
   which means that $F(x,y)$ is decreasing function of $y$ and
   $$F(x,y)\geq F(x,1) =G(x)$$
   with
\begin{align*}
     G(x)=\frac{1}{64} \bigg( ( 3 B_1^2 - B_2 )( B_1^2 +B_2 )+ B_1 ( 2 B_1^2 - 2 B_2 -&B_1)\bigg) x^2 -\frac{B_1^2}{4} \\
                                                                &-\frac{1}{8} B_1 ( B_1^2 + 3 B_1  - B_2 ) x +1.
\end{align*}
    A computation shows that $G'(x)=0$ at
    $$x_0=\frac{4 B_1 ( B_1^2 + 3 B_1 - B_2 )}{( 3 B_1^2 - B_2 ) ( B_1^2 +B_2 )+ B_1 ( 2 B_1^2 - 2 B_2 - B_1 ) }$$
     and
     $$ G''(x_0)= \frac{1}{32} \left( ( 3 B_1^2 - B_2 )( B_1^2 +B_2)+ B_1 ( 2 B_1^2 - 2 B_2 -B_1 )\right) .$$
   Since $B_1 >0$ and $B_1^2 \geq B_2$,  numerator of $x_0$ is always positive. Note that, denominator of $x_0$ is same as $32 G''(x_0),$ which gives $x_0 <0$ $(\text{or}\; x_0>0)$ iff $G''(x_0)<0$ $(\text{or}\; G''(x_0)>0).$  Now, there arise two cases:\\
{\bf{Case 1:}} If $x_0 <0 $ or $x_0 >4 $, which means $G(x)$ does not have any critical point and
    \begin{align*}
    \det{T_{3,1}}(f)&\geq \min\left\{ G(0),G(4)\right\}\\
                    &= \min\left\{ 1-\frac{B_1^2}{4}, 1 - 2 B_1^2 + \frac{3 B_1^4}{4} + \frac{B_1^2 B_2}{2} - \frac{B_2^2}{4} \right\},
\end{align*}
   which yields \ref{ffff}. Both   $G(0)$ and $G(4)$ are sharp for the extremal functions $f_2$ and $f_1$ respectively, given by (\ref{extremalf1}).
      The case $x_0 <0$ also exhaust the possibility of $G''(x_0)<0.$\\
{\bf{Case 2:}} 
    If $x_0\in (0,4]$, then $G''(x_0)>0$ and the function $G$ attains its minimum value at $x_0$. Here, we discuss two possibilities for $x_0$, which are $x_0 = 4$ and $x_0\in (0,4)$.
   For $x_0 = 4$, we have
 \begin{equation}\label{lowerb2}
\begin{aligned}
     \det {T_{3,1}(f)}& \geq  G(4)   \\
                        &= 1 - 2 B_1^2 + \frac{3 B_1^4}{4} + \frac{B_1^2 B_2}{2} - \frac{B_2^2}{4},
\end{aligned}
\end{equation}
   which establishes (\ref{lowerb4}).
   If $x_0 \in (0,4)$, then 
     $$\det{T_{3,1}(f)}\geq G(x_0),$$
     which proves (\ref{lowerb3}).
\end{proof}

    We get lower bound of $\det{T}_{3,1}(f)$ for different classes with corresponding alternatives of $\varphi(z)$.
\begin{corollary}
    Let $f \in \mathcal{S}^*[A,B]$, then the following hold:
\begin{enumerate}
  \item If $ \mu \notin [0,4]$, then
  $$ \det T_{3,1}(f) \geq \min \bigg\{ 1- \frac{(A - B)^2}{4},1 + \frac{(A - B)^2 ( 3 A^2 - 8 A B + 4 B^2 -8 )}{4}  \bigg\}.  $$
  \item If $ \mu = 4$, then
     $$ \det T_{3,1}(f) \geq 1 + \frac{(A - B)^2 ( 3 A^2 - 8 A B + 4 B^2 -8 )}{4}. $$
     \item If $\mu \in (0,4)$, then
     $$  \det T_{3,1}(f) \geq  1 - \frac{(A - B)^2 ( A^2 - 2 A B + B^2 + 2 A  + 2 )}{( + 3 A^2 - 8 A B + 4 B^2 + 2 A -1)},$$
\end{enumerate}
   where $ \mu =4 (3 + A)/( + 3 A^2 + 4 B^2 - 8 A B + 2 A -1 )$. The first two inequalities are sharp.
\end{corollary}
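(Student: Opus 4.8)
The plan is to derive this corollary directly from Theorem~\ref{Tlowerb} by specializing to the Ma--Minda function $\varphi(z)=(1+Az)/(1+Bz)$ that defines the Janowski class $\mathcal{S}^*[A,B]$. First I would expand $\varphi$ as a power series to read off its first two nontrivial coefficients. Writing $(1+Az)(1+Bz)^{-1}=(1+Az)\sum_{n\ge 0}(-B)^n z^n$ gives
\begin{equation*}
   B_1=A-B \quad\text{and}\quad B_2=-B(A-B).
\end{equation*}
In particular $B_1>0$ since $A>B$, and the hypothesis $B_1^2\ge B_2$ of Theorem~\ref{Tlowerb} reduces to $(A-B)^2\ge -B(A-B)$, i.e.\ to $A\ge 0$; this is the standing assumption under which the corollary is read.

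The key algebraic observation that makes every expression collapse is the identity $B_1+B=A$. With it, the recurring factor in Theorem~\ref{Tlowerb} simplifies cleanly:
\begin{equation*}
   B_1^2+3B_1-B_2=(A-B)\bigl((A-B)+3+B\bigr)=(A-B)(A+3),
\end{equation*}
and a short computation turns the denominator appearing throughout into
\begin{equation*}
   (3B_1^2-B_2)(B_1^2+B_2)+B_1(2B_1^2-2B_2-B_1)=(A-B)^2\bigl(3A^2-8AB+4B^2+2A-1\bigr).
\end{equation*}
Substituting $B_1=A-B$, $B_2=-B(A-B)$ into $\mu$, the common factor $(A-B)^2$ cancels between numerator and denominator, leaving $\mu=4(A+3)/(3A^2-8AB+4B^2+2A-1)$, which recovers the stated threshold.

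It then remains to push these substitutions through the three bounds of Theorem~\ref{Tlowerb}. For the endpoint values I would simplify $1-B_1^2/4$ to $1-(A-B)^2/4$, and expand
\begin{equation*}
   1-2B_1^2+\tfrac{3}{4}B_1^4+\tfrac12 B_1^2 B_2-\tfrac14 B_2^2
   =1+\tfrac14(A-B)^2\bigl(3A^2-8AB+4B^2-8\bigr),
\end{equation*}
where the inner polynomial arises from $3B_1^2-2B B_1-B^2=3A^2-8AB+4B^2$. For the interior bound (\ref{lowerb3}) I would combine its two subtracted terms over the common denominator $D:=3A^2-8AB+4B^2+2A-1$; using $D+(A+3)^2=4(A^2-2AB+B^2+2A+2)$ the factor $4$ cancels and one arrives at the single fraction in part~(3). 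The computation is purely mechanical, so the main obstacle is simply bookkeeping: keeping the substitution $B_2=-B(A-B)$ consistent and confirming that the common factor $(A-B)^2$ shared by the numerators and denominators cancels as claimed. Finally, sharpness of the first two inequalities transfers verbatim from Theorem~\ref{Tlowerb}, the extremal functions being the Janowski specializations of $f_1$ and $f_2$ in (\ref{extremalf1}).
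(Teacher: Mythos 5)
Your proposal is correct and takes exactly the route the paper intends: the corollary is a direct specialization of Theorem \ref{Tlowerb} via $B_1=A-B$, $B_2=-B(A-B)$, and all of your simplifications (the factorization $B_1^2+3B_1-B_2=(A-B)(A+3)$, the denominator identity, the cancellation of $(A-B)^2$ in $\mu$, and the identity $D+(A+3)^2=4(A^2-2AB+B^2+2A+2)$ for part (3)) check out. You also correctly make explicit a hypothesis the paper leaves tacit, namely that the condition $B_1^2\geq B_2$ of Theorem \ref{Tlowerb} reduces to $A\geq 0$ for the class $\mathcal{S}^*[A,B]$.
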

    In the following corollary, bounds are given for certain classes when coefficients $B_1$ and $B_2$ of $\varphi(z)$ satisfy the condition (\ref{ffff}) or (\ref{lowerb4}).
\begin{corollary}\label{crl1}
\begin{enumerate}
    \item If $f\in \mathcal{S}^*_{\varrho}$, then $\det{T_{3,1}(f)}\geq 0 $.
    \item If $f\in \mathcal{S}^*_{\sin}$, then $\det{T_{3,1}(f)}\geq -1/4$.
    \item If $f\in \mathcal{S}_P$, then $\det{T_{3,1}(f)}\geq 1 - 64 \left(19 \pi^4 -24 \pi^2 -432 \right)/(9 \pi^8)$.
    \item If $f\in \mathcal{S}^*_{RL}$, then $\det{T_{3,1}(f)}\geq -9 (4130 \sqrt{2}- 5861)/256$.
\end{enumerate}
\end{corollary}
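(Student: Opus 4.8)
The plan is to read off Corollary \ref{crl1} from Theorem \ref{Tlowerb} by specialising $\varphi$. For each of the four classes I would first record the Ma--Minda function and extract $B_1=\varphi'(0)$ and $B_2=\tfrac12\varphi''(0)$: for $\mathcal{S}^*_{\varrho}$ one takes $\varphi(z)=1+ze^{z}$, so $B_1=1$, $B_2=1$; for $\mathcal{S}^*_{\sin}$ one takes $\varphi(z)=1+\sin z$, so $B_1=1$, $B_2=0$; for $\mathcal{S}_P$ the expansion of $1+(2/\pi^{2})\bigl(\log\tfrac{1+\sqrt z}{1-\sqrt z}\bigr)^{2}$ gives $B_1=8/\pi^{2}$ and $B_2=16/(3\pi^{2})$; and for $\mathcal{S}^*_{RL}$, writing $c=\sqrt2-1$, the function $\sqrt2-c\sqrt{(1-z)/(1+2cz)}$ yields $B_1=c(1+2c)/2=(5-3\sqrt2)/2$ and $B_2=-c(12c^{2}+4c-1)/8=(71-51\sqrt2)/8$. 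In every case the standing hypothesis $B_1^{2}\ge B_2$ of Theorem \ref{Tlowerb} is readily checked, so the theorem applies.

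Next I would evaluate $\mu$ from its defining expression in Theorem \ref{Tlowerb} to decide the active branch. For $\mathcal{S}^*_{\varrho}$ and $\mathcal{S}^*_{\sin}$ a direct substitution gives $\mu=4$, which is case (2): inserting $B_1,B_2$ into (\ref{lowerb4}), namely $1-2B_1^{2}+\tfrac34 B_1^{4}+\tfrac12 B_1^{2}B_2-\tfrac14 B_2^{2}$, collapses to $0$ and $-1/4$ respectively. For $\mathcal{S}_P$ and $\mathcal{S}^*_{RL}$ the value of $\mu$ exceeds $4$, so case (1) is in force and one must compare the two competitors in (\ref{ffff}); in both classes the minimum turns out to be attained by the second quantity, i.e. again the $G(4)$-expression appearing in (\ref{lowerb4}). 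Substituting $B_1=8/\pi^{2}$, $B_2=16/(3\pi^{2})$ there and clearing denominators reproduces $1-64(19\pi^{4}-24\pi^{2}-432)/(9\pi^{8})$, while substituting the surd values for $\mathcal{S}^*_{RL}$ and simplifying gives $-9(4130\sqrt2-5861)/256$.

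The required input is entirely contained in Theorem \ref{Tlowerb}; the work is verification and bookkeeping, and I expect the main obstacle to be computational and concentrated in the last two classes. For $\mathcal{S}_P$ one must expand the squared inverse hyperbolic tangent carefully enough to obtain $B_2$, and then manage the mixed powers $\pi^{4},\pi^{6},\pi^{8}$ when recombining (\ref{lowerb4}) over the common denominator $9\pi^{8}$. For $\mathcal{S}^*_{RL}$ the coefficients and the resulting bound carry $\sqrt2$, so both the evaluation of $\mu$ (to confirm $\mu\notin[0,4]$) and the reduction of the governing inequality demand careful handling of the surds; in particular one must check the sign of $G(0)-G(4)$ to be certain that the second term of (\ref{ffff}) is the genuine minimum. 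No idea beyond Theorem \ref{Tlowerb} enters.
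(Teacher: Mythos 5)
Your proposal is correct and follows exactly the route the paper intends: the corollary is a direct specialization of Theorem \ref{Tlowerb}, and your identifications of $\varphi$, the coefficients $B_1,B_2$, the branch selection ($\mu=4$ for $\mathcal{S}^*_{\varrho}$ and $\mathcal{S}^*_{\sin}$; $\mu>4$ with the minimum in (\ref{ffff}) attained by the $G(4)$-expression for $\mathcal{S}_P$ and $\mathcal{S}^*_{RL}$), and the final simplifications all check out. The paper gives no explicit proof beyond this specialization, so there is nothing further to reconcile.
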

\begin{remark} Some of the previously known results are established as special cases of the result, which are given below.
\begin{enumerate}
     \item If $f\in \mathcal{S}^*_{SG}$, then $\det{T_{3,1}(f)}\geq 35/64$ \cite[Theorem 2.1]{Cho}.
    \item If $f\in \mathcal{S}^*_{B}$, then $\det{T_{3,1}(f)}\geq 0$ \cite[Theorem 2.2]{Cho}.
    \item If $f\in \mathcal{S}^*_{Ne}$, then $\det{T_{3,1}(f)}\geq -1/4$ \cite[Theorem 4.2]{Sushil}.
    \item If $f\in \mathcal{S}^*_L$, then $\det{T_{3,1}(f)}\geq 135/256$ \cite[Theorem 3]{Vkumar}.
     \item If $f\in \mathcal{S}^*(1/2)$, then $\det{T_{3,1}(f)}\geq 0$ \cite[Corollary 4]{cudna}.
\end{enumerate}
\end{remark}
   Note that, the choice of $\varphi(z)$ satisfy the condition in (\ref{lowerb3}) for the classes $\mathcal{S}^*$ and $\Delta^*$ and by Theorem \ref{Tlowerb}, we have the following results.
\begin{remark}
\begin{enumerate}
  \item If $f\in \mathcal{S}^*$, then $\det{T_{3,1}(f)}\geq -1$ \cite[Corollary 3]{cudna},\cite[Corollary 2]{B.Kowa}.
  \item If $f\in \Delta^*$, then $\det{T_{3,1}(f)}\geq -1/15$ \cite[Theorem 2]{Vkumar}.
\end{enumerate}
\end{remark}
\begin{theorem}\label{LowerHC}
    If $f\in \mathcal{C}(\varphi)$ such that $B_1^2 \geq 2 B_2$, then
\begin{align*}
   T_{3,1}(f) \geq &
\left\{
\begin{array}{lll}
    \min \bigg\{1 - \dfrac{B_1^2}{36}, 1 -\dfrac{B_1^2}{2} + \dfrac{B_1^4}{18} + \dfrac{B_1^2 B_2}{36} - \dfrac{B_2^2}{36}  \bigg\}, & \sigma \notin [0,4], \\  \\
     1 -\dfrac{B_1^2}{2} + \dfrac{B_1^4}{18} + \dfrac{B_1^2 B_2}{36} - \dfrac{B_2^2}{36}, & \sigma=4, \\ \\
     1 - \dfrac{B_1^3 (B_1^3 + 4 B_1^2 + 28 B_1 - 8 B_2)}{ 16 ( 2 B_1^4 + B_1^3 -B_1^2 - 2 B_1 B_2 + B_1^2 B_2 - B_2^2)}, &\sigma \in (0,4),
\end{array}
\right.
\end{align*}
  where $$\sigma = \frac{2 B_1 (B_1^2 + 16 B_1  - 2 B_2)}{ 2 B_1^4 + B_1^3 -B_1^2   - 2 B_1 B_2 + B_1^2 B_2 - B_2^2}.$$
  The first two inequalities are sharp.
\end{theorem}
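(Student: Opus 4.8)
The plan is to follow verbatim the strategy of Theorem~\ref{Tlowerb}, adapting every step to the convex normalization. Starting from the membership relation $1 + zf''(z)/f'(z) = \varphi(\omega(z))$ with $\omega$ a Schwarz function, I introduce the Carath\'{e}odory function $p(z) = 1 + p_1 z + p_2 z^2 + \cdots \in \mathcal{P}$ via $\omega = (p-1)/(p+1)$ and compare the coefficients of $z$ and $z^2$. A short computation gives
$$a_2 = \frac{B_1 p_1}{4}, \qquad a_3 = \frac{1}{24}\left((B_1^2 - B_1 + B_2)p_1^2 + 2 B_1 p_2\right),$$
which differ from the starlike formulas only by the Alexander factors $1/4$ and $1/24$. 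As before, rotational invariance of $\mathcal{C}(\varphi)$ and $\mathcal{P}$ lets me assume $p_1 \in [0,2]$.

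Next I substitute Libera's representation $2p_2 = p_1^2 + (4-p_1^2)\zeta$ from Lemma~\ref{lemma1} and insert $a_2, a_3$ into (\ref{T31}), namely $\det T_{3,1}(f) = 2\RE(a_2^2\bar a_3) - 2\lvert a_2\rvert^2 - \lvert a_3\rvert^2 + 1$, writing the result as a function $F(p_1^2, \lvert \zeta\rvert, \RE\bar\zeta)$. The decisive point is the coefficient of $\RE\bar\zeta$: collecting the contributions from $2\RE(a_2^2\bar a_3)$ and $-\lvert a_3\rvert^2$, it equals $\tfrac{B_1(B_1^2 - 2B_2)}{576}\,p_1^2(4-p_1^2)$, so the hypothesis $B_1^2 \geq 2B_2$ forces it to be nonnegative. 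Hence $F$ is minimized by taking $\RE\bar\zeta = -\lvert\zeta\rvert$, and since then $\partial F/\partial\lvert\zeta\rvert \leq 0$, also by taking $\lvert\zeta\rvert = 1$. Writing $x = p_1^2 \in [0,4]$ this reduces the problem to minimizing a single quadratic $G(x) = F(x,1)$.

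A direct expansion yields $G(0) = 1 - B_1^2/36$ and $G(4) = 1 - B_1^2/2 + B_1^4/18 + B_1^2 B_2/36 - B_2^2/36$, matching the two endpoint values in the statement, while $G''(x)$ is a positive multiple of the denominator appearing in $\sigma$, and the unique stationary point of $G$ is exactly $x_0 = \sigma$. The argument then splits into the three cases as in Theorem~\ref{Tlowerb}: if $\sigma \notin [0,4]$ the minimum is $\min\{G(0), G(4)\}$; if $\sigma = 4$ it is $G(4)$; and if $\sigma \in (0,4)$ then $G''(\sigma) > 0$ and the minimum is the interior vertex value $G(\sigma) = G(0) - G'(0)^2/(2G''(0))$, which after clearing denominators simplifies to the stated rational expression. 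The endpoint bounds $G(0)$ and $G(4)$ are attained by the extremal functions $f_4$ and $f_3$ (defined through $1 + zf''/f' = \varphi(z^2)$ and $=\varphi(z)$, respectively), which gives the claimed sharpness of the first two inequalities.

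I expect the main obstacle to be purely computational: verifying the sign of the coefficient of $\RE\bar\zeta$, and thereby pinning down that $B_1^2 \geq 2B_2$ (rather than the $B_1^2 \geq B_2$ of the starlike case) is precisely what is required, and then evaluating $G(\sigma)$ in closed form. The latter is a lengthy but routine vertex substitution; care is needed to keep the common denominator $16(2B_1^4 + B_1^3 - B_1^2 - 2B_1 B_2 + B_1^2 B_2 - B_2^2)$ in the stated shape and to confirm the numerator collapses to $B_1^3(B_1^3 + 4B_1^2 + 28B_1 - 8B_2)$. No sharp extremal is expected in the interior case, which is consistent with the theorem asserting sharpness only for the first two inequalities.
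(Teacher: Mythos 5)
Your proposal is correct and follows essentially the same route as the paper's own proof: the same coefficient formulas $a_2=B_1p_1/4$, $a_3=((B_1^2-B_1+B_2)p_1^2+2B_1p_2)/24$, Libera's lemma, reduction to the quadratic $G(x)=F(x,1)$ via the nonnegativity of the $\RE\bar{\zeta}$-coefficient under $B_1^2\geq 2B_2$, the same three-case analysis on the critical point $x_0=\sigma$, and sharpness via the functions $f_3$ and $f_4$. The only incidental differences are your use of the vertex formula $G(\sigma)=G(0)-G'(0)^2/(2G''(0))$ to evaluate the interior minimum (the paper states $G(x_0)$ directly) and your matching of $f_4$ to $G(0)$ and $f_3$ to $G(4)$, which is in fact the correct assignment, the paper's proof listing these two in the opposite order.
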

\begin{proof}
   Let $f(z) = z+ \sum_{n=2}^\infty a_n z^n \in \mathcal{C}(\varphi)$, then we have
   $$ 1+ \frac{ z f''(z)}{f'(z)} = \phi(\omega(z)), $$
   where $\omega(z)$ is a Schwarz function. Corresponding to the function $\omega(z)$, suppose there is $p(z) = 1 + \sum_{n=2}^\infty p_n z^n \in \mathcal{P}$ such that $w(z) = (p(z)-1)/(p(z)+1)$. Comparison of same powers of $z$ in the above equation with the series expansions of $f(z)$, $\varphi(z)$ and $p(z)$ gives
   $$ a_2 = \frac{B_1 p_1 }{4} \quad \text{and} \quad a_3 = \frac{(  B_1^2 -B_1 + B_2 ) p_1^2 + 2 B_1 p_2}{24}. $$
  Following the same procedure as in Theorem \ref{Tlowerb} with Lemma \ref{lemma1}, we obtain $\det T_{3,1}(f) \geq F(x,y)$, where
\begin{align*}
    F(x, y) =  1 &- \frac{B_1^2 x}{8} + \frac{1}{576}\bigg( (2 B_1^4 + B_1^2 B_2 - B_2^2) x^2  - (B_1^3 - 2 B_1 B_2) x (4- x) y \bigg) \\
                &- B_1^2 (4 - x)^2  y^2
\end{align*}
    for $x = p_1^2 \in [0,4]$ and $y = \lvert \zeta \rvert \in [0,1].$
    Partial derivative with respect to $y$ shows that
    $$ \frac{\partial F}{\partial y} = - \frac{(B_1^3 - 2 B_1 B_2) (4 - x) x}{576} -  \frac{B_1^2 (4 - x)^2 y}{288} \leq 0$$
    whenever $B_1^2 \geq 2 B_2$. Therefore $F$ is a decreasing function of $y$ and $F(x,y) \geq F(x,1)=:G(x),$ where
    $$ G(x) = 1 - \frac{B_1^2}{36} - \frac{B_1 ( B_1^2 + 16 B_1  - 2 B_2) x}{144} +  \frac{( 2 B_1^4 + B_1^3 -B_1^2  - 2 B_1 B_2 + B_1^2 B_2 - B_2^2)x^2}{576}. $$
    A simple computation reveals that $G'(x)=0$ at
    $$ x_0 = \frac{2 B_1 ( B_1^2 + 16 B_1  - 2 B_2)}{ 2 B_1^4 + B_1^3 -B_1^2 - 2 B_1 B_2 + B_1^2 B_2 - B_2^2} $$
    and
    $$ G''(x) = \frac{ 2 B_1^4  + B_1^3 -B_1^2  - 2 B_1 B_2 + B_1^2 B_2 - B_2^2 }{288}. $$
    Since $B_1^2 \geq B_2$ and $B_1 >0$, therefore numerator of $x_0$ is always positive.  Also note that, the denominator of $x_0$ and numerator of $G''(x)$ are same. Thus \\
{\bf{Case I:}} when $0 < x_0 < 4$, $G''(x_0) >0$ and hence minimum will attain at $x_0$.  In this case
\begin{align*}
   \det T_{3,1}(f) \geq \min G(x) &= G(x_0) \\
               &= 1 - \frac{B_1^3 (B_1^3 + 4 B_1^2 + 28 B_1 - 8 B_2)}{ 16 ( 2 B_1^4 + B_1^3 -B_1^2 - 2 B_1 B_2 + B_1^2 B_2 - B_2^2)}.
\end{align*}
{\bf{Case-II:}} If $x_0 < 0$ or $x_0 > 4$, which indicate that the critical point does not lie in the domain, then
\begin{align*}
   \det T_{3,1}(f) &\geq \min\{G(0), G(4)\}\\
              & = \min \bigg\{1 - \frac{B_1^2}{36}, 1 -\frac{B_1^2}{2} + \frac{B_1^4}{18} + \frac{B_1^2 B_2}{36} - \frac{B_2^2}{36}  \bigg\}.
\end{align*}
   Further, for $x_0 =4$, $ \det T_{3,1}(f) \geq G(4).$

   For the functions $f_3$ and $f_4$ defined in (\ref{ExtC}), we have
   $$\det T_{3,1}(f_3) = 1 - \frac{B_1^2}{36}, \quad \det T_{3,1}(f_4) =1 -\frac{B_1^2}{2} + \frac{B_1^4}{18} + \frac{B_1^2 B_2}{36} - \frac{B_2^2}{36} , $$
   which shows the sharpness of the bounds.
\end{proof}
    Theorem \ref{LowerHC} yields the following result for the class $\mathcal{C}$ and its subclasses.
\begin{corollary}
    Let $f \in \mathcal{C}[A,B]$ and $(A -B)^2  \geq 2 ( B^2 -A B )$, then the following bounds hold:
\begin{enumerate}
  \item If $\sigma \notin [0,4],$ then
  $$  T_{3,1}(f) \geq \min \bigg\{1 - \frac{(A-B)^2}{36}, 1 + \frac{(A - B)^2 (2 A^2 + 2 B^2  - 5 A B -18  )}{36}  \bigg\}.$$
  \item If $\sigma=4,$ then
  $$ T_{3,1}(f) \geq 1 + \frac{(A - B)^2 (2 A^2 + 2 B^2  - 5 A B -18  )}{36}. $$
  \item If $\sigma \in (0,4)$, then
  $$ T_{3,1}(f) \geq 1 - \frac{(A - B)^2 ( A^2 + B^2 + 4 A + 4 B - 2 A B + 28 )}{ 16 ( 2 A^2 + 2 B^2 + A  + B - 5 A B -1 )},$$
\end{enumerate}
   where $\sigma = 2 (A + B + 16 )/(2 A^2 + 2 B^2 + A  + B - 5 A B  -1 )$. The first two inequalities are sharp.
\end{corollary}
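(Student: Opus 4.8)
This corollary is the specialization of Theorem~\ref{LowerHC} to the Janowski convex class, since $\mathcal{C}[A,B]=\mathcal{C}(\varphi)$ for $\varphi(z)=(1+Az)/(1+Bz)$. The plan is therefore to compute the first two Taylor coefficients of this $\varphi$ and substitute them into the three branches of Theorem~\ref{LowerHC}. Expanding $\varphi(z)=(1+Az)(1-Bz+B^2z^2-\cdots)=1+(A-B)z+(B^2-AB)z^2+\cdots$ gives $B_1=A-B$ and $B_2=B^2-AB$. With these values the hypothesis $B_1^2\ge 2B_2$ of Theorem~\ref{LowerHC} becomes precisely $(A-B)^2\ge 2(B^2-AB)$, the standing assumption of the corollary, so the theorem applies.

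Writing $d:=A-B$, so that $B_1=d$ and $B_2=-Bd$, I would first simplify $\sigma$. Its numerator $2B_1(B_1^2+16B_1-2B_2)$ factors as $2d^2(d+16+2B)=2d^2(A+B+16)$, and its denominator $2B_1^4+B_1^3-B_1^2-2B_1B_2+B_1^2B_2-B_2^2$ factors as $d^2\bigl(2d^2+d-1+2B-Bd-B^2\bigr)$. Using the identity $2d^2-Bd-B^2=2A^2-5AB+2B^2$ together with $d+2B-1=A+B-1$, the bracket collapses to $2A^2+2B^2-5AB+A+B-1$; cancelling the common factor $d^2$ yields the stated $\sigma$. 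The three branches are handled the same way: in each case a factor $B_1^2=d^2$ is common to the numerator and the denominator of the correction term, and after cancellation the remaining polynomial identities in $A$ and $B$ (namely $2d^2-Bd-B^2=2A^2-5AB+2B^2$ for the $\sigma\notin[0,4]$ and $\sigma=4$ branches, and $d^2+4d+28+8B=A^2+B^2-2AB+4A+4B+28$ for the $\sigma\in(0,4)$ branch) reproduce exactly the three right-hand sides asserted. The first term $1-B_1^2/36$ of the minimum is immediately $1-(A-B)^2/36$.

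Finally, the sharpness of the first two inequalities is inherited directly from Theorem~\ref{LowerHC}: the extremal functions $f_3,f_4$ of (\ref{ExtC}) attached to this particular $\varphi$ realize the bounds $1-B_1^2/36$ and $1-B_1^2/2+B_1^4/18+B_1^2B_2/36-B_2^2/36$, which are the two specialized expressions. The only real work here is bookkeeping; the whole argument is a substitution followed by the verification of a handful of polynomial identities in $A$ and $B$, and the main point to watch is that the common factor $(A-B)^2$ be correctly extracted from every numerator and denominator before simplifying, so that no spurious factor survives the cancellation.
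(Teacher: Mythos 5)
Your proposal is correct and matches the paper's (implicit) proof exactly: the corollary is obtained by substituting $B_1=A-B$, $B_2=B^2-AB$ from $\varphi(z)=(1+Az)/(1+Bz)$ into Theorem~\ref{LowerHC}, verifying that the hypothesis $B_1^2\ge 2B_2$ becomes $(A-B)^2\ge 2(B^2-AB)$, and simplifying $\sigma$ and the three bounds via the polynomial identities you state, with sharpness inherited from the extremal functions $f_3,f_4$. All of your algebraic identities check out (in the third branch the numerator carries $d^4$, so cancelling $d^2$ correctly leaves the factor $(A-B)^2$ in the stated bound), so nothing further is needed.
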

\begin{corollary}
   If $f \in \mathcal{C}$, then $\det T_{3,1}(f) \geq 0$. The bound is sharp.
\end{corollary}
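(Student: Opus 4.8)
The plan is to deduce this corollary as a direct specialization of Theorem \ref{LowerHC}, exactly in the spirit of the remarks and corollaries that follow the earlier theorems. The key observation is that $f \in \mathcal{C}$ precisely when $\RE\left(1 + z f''(z)/f'(z)\right) > 0$, which is the subordination $1 + z f''(z)/f'(z) \prec (1+z)/(1-z)$; hence $\mathcal{C} = \mathcal{C}(\varphi)$ for the half-plane mapping $\varphi(z) = (1+z)/(1-z)$. First I would expand
\begin{equation*}
   \varphi(z) = \frac{1+z}{1-z} = 1 + 2z + 2z^2 + 2z^3 + \cdots,
\end{equation*}
so that $B_1 = 2$ and $B_2 = 2$. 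I would then check the standing hypothesis of Theorem \ref{LowerHC}, namely $B_1^2 \geq 2 B_2$, which here reads $4 \geq 4$ and holds (with equality), so the theorem applies.

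Next I would compute the branching parameter $\sigma$ to determine which of the three cases governs the bound. Substituting $B_1 = B_2 = 2$ into
\begin{equation*}
   \sigma = \frac{2 B_1 (B_1^2 + 16 B_1 - 2 B_2)}{2 B_1^4 + B_1^3 - B_1^2 - 2 B_1 B_2 + B_1^2 B_2 - B_2^2}
\end{equation*}
gives numerator $2\cdot 2\cdot(4 + 32 - 4) = 128$ and denominator $32 + 8 - 4 - 8 + 8 - 4 = 32$, so $\sigma = 4$. Thus we are in the second case of Theorem \ref{LowerHC}, which yields
\begin{equation*}
   \det T_{3,1}(f) \geq 1 - \frac{B_1^2}{2} + \frac{B_1^4}{18} + \frac{B_1^2 B_2}{36} - \frac{B_2^2}{36}.
\end{equation*}

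Finally I would substitute $B_1 = B_2 = 2$ into the right-hand side and verify it collapses to $0$:
\begin{equation*}
   1 - 2 + \frac{16}{18} + \frac{8}{36} - \frac{4}{36} = -1 + \frac{8}{9} + \frac{1}{9} = 0,
\end{equation*}
which is exactly the claimed inequality $\det T_{3,1}(f) \geq 0$. For sharpness I would invoke the extremal function $f_4$ from \eqref{ExtC}, since the $\sigma = 4$ case of Theorem \ref{LowerHC} is asserted there to be sharp and realized by $f_4$; concretely, $f_4$ satisfies $1 + z f_4''(z)/f_4'(z) = \varphi(z^2) = (1+z^2)/(1-z^2)$. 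There is essentially no conceptual obstacle here: the only point requiring care is the arithmetic verification that $\sigma$ equals $4$ rather than falling into the open interval $(0,4)$ or outside $[0,4]$, since landing in the correct case is what produces the clean value $0$; a minor slip in the denominator of $\sigma$ or in the final substitution would push the computation into the wrong branch.
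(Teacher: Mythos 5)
Your proposal is correct and follows exactly the route the paper intends: the corollary is the specialization of Theorem \ref{LowerHC} to $\varphi(z)=(1+z)/(1-z)$ (equivalently, the Janowski case $A=1$, $B=-1$), and your arithmetic checks out — the hypothesis $B_1^2\geq 2B_2$ holds with equality, $\sigma=128/32=4$ lands in the second case, the bound collapses to $0$, and sharpness comes from the extremal function $f_4$ with $1+zf_4''(z)/f_4'(z)=(1+z^2)/(1-z^2)$.
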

    Recently, the lower and upper bounds of $\det{T}_{2,1}(f)$ and $\det{T}_{3,1}(f)$ for functions $f$ in $\mathcal{F}_1$, $\mathcal{F}_2$, $\mathcal{F}_3$ and $\mathcal{F}_4$  are obtained \cite{B.Kowa,Vkumar3,Lecko2}. By generalizing these works, when $f\in \mathcal{K}$, we obtain the bounds of $\det{T}_{2,1}(f)$ and $\det{T}_{3,1}(f)$  for different choices of $g\in \mathcal{S}^*$.
\begin{theorem}\label{KTM}
     Let $f\in \mathcal{K}$ for some function $g(z)=z+b_2 z^2+b_3 z^3+\cdots \in \mathcal{S}^*$, such that
\begin{equation}\label{hypo}
            6 \lvert b_2\rvert^3 - 4 \lvert b_2\rvert ( \lvert b_3 \rvert -1 ) - 4 (  \lvert b_3 \rvert-1 )^2 +
    \lvert b_2\rvert^2 ( 3  \lvert b_3\rvert +5  )\geq 0,
\end{equation}
   then
\begin{enumerate}
  \item $1-\left(1+\frac{\lvert b_2\rvert}{2}\right)^2\leq \det{T_{2,1}(f)}\leq 1.$
  \item If $ 6 \lvert b_2 \rvert^3 + \lvert b_2 \rvert^2 (3 \lvert b_3 \rvert +13)+4\lvert b_2 \rvert (\lvert b_3 \rvert -1) - 2 (1 - \lvert b_3 \rvert )^2  - 18 \leq 0$, then
\begin{equation}\label{uub1}
       \det{T_{3,1}(f)}\leq 1.
\end{equation}
  \item If  $ 6 \lvert b_2 \rvert^3 + \lvert b_2 \rvert^2 (3 \lvert b_3 \rvert +13)+4\lvert b_2 \rvert (\lvert b_3 \rvert -1) - 2 (1 - \lvert b_3 \rvert )^2  - 18 > 0$, then
\begin{equation}\label{uub}
       \det{T_{3,1}(f)} \geq \frac{1}{18} \bigg( 6 \lvert b_2 \rvert^3 + \lvert b_2 \rvert^2 (3 \lvert b_3 \rvert +13)+4\lvert b_2 \rvert (\lvert b_3 \rvert -1) - 2 (1 - \lvert b_3 \rvert )^2 \bigg).
\end{equation}
\end{enumerate}
   All these bounds are sharp.
\end{theorem}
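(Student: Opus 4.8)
The plan is to exploit the close-to-convex structure to express the low-order coefficients of $f$ through those of $g$ and of an auxiliary Carath\'eodory function, and then to reuse the downward-parabola-in-$|a_3|$ mechanism from Theorems \ref{TUB} and \ref{TKUB}.

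First I would rewrite $\RE\left(zf'(z)/g(z)\right)>0$ as $zf'(z)=g(z)p(z)$ for some $p(z)=1+p_1z+p_2z^2+\cdots\in\mathcal{P}$. Comparing coefficients with $f(z)=z+a_2z^2+\cdots$ and $g(z)=z+b_2z^2+b_3z^3+\cdots$ yields
$$a_2=\frac{b_2+p_1}{2},\qquad a_3=\frac{b_3+b_2p_1+p_2}{3}.$$
Since $|p_1|\le2$, the triangle inequality gives $|a_2|\le1+|b_2|/2=:M_2$, which settles part~(1): the lower bound $1-(1+|b_2|/2)^2\le\det T_{2,1}(f)$ and the trivial $\det T_{2,1}(f)=1-|a_2|^2\le1$. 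Inserting Lemma~\ref{lemma1} for $p_2$ and taking moduli also produces the estimate $|a_3|\le(|b_3|+2|b_2|+2)/3=:M_3$.

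For $\det T_{3,1}(f)$ I would argue exactly as in the earlier theorems. Using $\RE(a_2^2\bar a_3)\le|a_2|^2|a_3|$ in (\ref{T31}) gives
$$\det T_{3,1}(f)\le u(|a_3|),\qquad u(x)=-x^2+2|a_2|^2x-2|a_2|^2+1,$$
a downward parabola with vertex at $x=|a_2|^2$. The real task is to maximise the right-hand side over all admissible $p$, the delicate point being that $a_2$ and $a_3$ are coupled through the common parameter $p_1$. Substituting $2p_2=p_1^2+(4-p_1^2)\zeta$ from Lemma~\ref{lemma1}, taking $p_1$ real in $[0,2]$ by rotation (legitimate since $\det T_{3,1}$ is rotationally invariant and the estimates involve $g$ only through $|b_2|,|b_3|$), and letting $\zeta$ run over $[-1,1]$, the bound becomes $u(a_3)$ with $a_3$ sweeping an interval $[a_3^-(p_1),a_3^+(p_1)]$ whose endpoints arise from $\zeta=\mp1$. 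Along this family there are only two serious candidates for the maximum: the vertex value $(|a_2|^2-1)^2$, available whenever $|a_2|^2$ lies inside $[a_3^-,a_3^+]$, and the corner value at $p_1=2$, where the interval collapses to the single point $M_3$ and $\det T_{3,1}(f)=u(M_3)$. Hypothesis~(\ref{hypo}) is exactly the inequality ensuring that this corner value dominates the vertex competitor for every $p_1<2$, so the maximiser is pinned at $p_1=2$ and $\det T_{3,1}(f)\le u(M_3)$. A direct computation identifies $u(M_3)$ with the right-hand side of (\ref{uub}) and the inequality $u(M_3)\le1$ with the hypothesis of part~(2); parts~(2) and~(3) are then merely the two alternatives $u(M_3)\le1$ and $u(M_3)>1$.

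Sharpness is read off the same corner. For part~(3), choosing $g$ with real coefficients $b_2=|b_2|$, $b_3=|b_3|$ together with $p(z)=(1+z)/(1-z)$ makes $a_2=M_2$ and $a_3=M_3$ real and positive, so every inequality above becomes an equality and $\det T_{3,1}(f)=u(M_3)$; for part~(2) the value $1$ is realised by a function with $a_2=a_3=0$ (equivalently the limiting corner $u(M_3)=1$). The step I expect to be hardest is precisely this comparison: unlike the monotone-in-$y$ situation of Theorem~\ref{Tlowerb}, here we are \emph{maximising}, so the movable vertex $|a_3|=|a_2|^2$ competes at every $p_1$, and collapsing the two-parameter problem onto the single corner $p_1=2$ is exactly the algebraic content packaged in (\ref{hypo}).
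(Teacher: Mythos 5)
Your skeleton matches the paper's proof: the factorization $zf'(z)=g(z)p(z)$ with $p\in\mathcal{P}$, the coefficient formulas $a_2=(b_2+p_1)/2$ and $a_3=(b_3+b_2p_1+p_2)/3$, the bounds $\lvert a_2\rvert\le M_2:=1+\lvert b_2\rvert/2$ and $\lvert a_3\rvert\le M_3:=(\lvert b_3\rvert+2\lvert b_2\rvert+2)/3$, part (1), and the reduction of $\det T_{3,1}(f)$ to maximizing the parabola $u(x)=-x^2+2\lvert a_2\rvert^2x-2\lvert a_2\rvert^2+1$ in $x=\lvert a_3\rvert$ are all the same (the paper gets by with $\lvert p_1\rvert,\lvert p_2\rvert\le2$ alone; your detour through Lemma \ref{lemma1} and rotations is unnecessary here, and, like the paper's own proof, you read (\ref{uub}) as an upper bound --- the ``$\geq$'' in the statement is evidently a misprint).

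The genuine gap is in what you assert hypothesis (\ref{hypo}) does. You claim it makes the corner value $C:=u(M_3)\big|_{\lvert a_2\rvert=M_2}$ dominate the vertex value $(\lvert a_2\rvert^2-1)^2$ for \emph{every} admissible $p_1$, so that the maximiser is pinned at $p_1=2$ and $\det T_{3,1}(f)\le C$ unconditionally. That is false: the vertex value equals $1$ when $a_2=0$ (achievable by taking $p_1=-b_2$, legitimate since $\lvert b_2\rvert\le2$), so your claim would force $C\ge1$, which (\ref{hypo}) does not give --- indeed $C\le1$ is exactly the hypothesis of part (2). Concretely, let $g(z)=z/(1-z^2)$, so $b_2=0$, $b_3=1$; then (\ref{hypo}) holds (it reads $0\ge0$) and $C=0$, yet the function $f$ with $f'(z)=1/(1-z^4)$ satisfies $zf'(z)=g(z)\cdot\frac{1}{1+z^2}$ with $\frac{1}{1+z^2}\in\mathcal{P}$, hence is close-to-convex with respect to this $g$, has $a_2=a_3=0$, and $\det T_{3,1}(f)=1>0=C$. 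So your intermediate inequality fails precisely in the part-(2) regime, and your derivation of (\ref{uub1}) is unsound even though its conclusion is true. What (\ref{hypo}) is actually equivalent to --- and this is how the paper uses it --- is the single comparison $C\ge(M_3-1)^2$, i.e.\ the corner dominates the vertex value only after the latter is capped by the constraint $\lvert a_2\rvert^2\le M_3$ (the case $2\le\lvert a_2\rvert^2\le M_3$); the competitor $1$, coming from vertices with $\lvert a_2\rvert^2\le2$, never disappears. The correct conclusion is $\det T_{3,1}(f)\le\max\{1,C\}$, and it is this maximum, not a pinned corner, that produces the dichotomy of parts (2) and (3). Your sharpness discussion survives this correction essentially unchanged; in fact your part-(2) witness (a function with $a_2=a_3=0$ relative to an admissible $g$, such as the $f$ above) is cleaner than the paper's choice $g(z)=f(z)=z$, which does not satisfy (\ref{hypo}).
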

\begin{proof}
    Let $f(z)= z+ \sum_{n=2}^\infty a_n z^n \in \mathcal{K}$, then for some $g\in \mathcal{S}^*$, we have
    $${z f'(z)}=  g(z) p(z).$$
    where $p(z)=1+p_1 z+p_2 z^2 +\cdots \in \mathcal{P}$. By comparing the coefficients of like powers on either side, we have
   $$a_2=\frac{b_2+p_1}{2} \;\; \text{and}\;\; a_3=\frac{1}{3}\left(b_3 +b_2 p_1+p_2\right).$$
   Using $\lvert p_n\rvert\leq 2$, we obtain
\begin{equation}\label{a2a3}
    \lvert a_2\rvert\leq \frac{2+ \lvert b_2 \rvert}{2} \;\; \text{and}\;\; \lvert a_3\rvert \leq \frac{1}{3}\left(\lvert b_3\rvert +2 \lvert b_2 \rvert +2\right).
\end{equation}
   When $g(z)= z+ b_2 z^2+b_3 z^3+\cdots $, the above bounds are sharp for the function $f_5$, given by
\begin{equation}\label{Extremal}
    f_5(z)=\int_0^z \frac{(1+t)g(t)}{t(1-t)}dt.
\end{equation}
   From (\ref{a2a3}), we can easily obtain the lower and upper estimate of $\det{T}_{2,1}(f)$.
   The Lower bound of $\det{T_{2,1}(f)}$ is sharp for the function $f_5$ given by (\ref{Extremal}) 
    whereas the equality in the upper bound is attained for
   $$f_6(z)= \int_{0}^z \frac{(1+t^3)}{(1-t^3)}\frac{1}{(1-t)^2} dt =z+z^2+ z^3+\cdots. $$

    Using the inequality $\RE ( a_2^2 \bar{a}_3) \leq \lvert a_2 \rvert^2 \lvert a_3\rvert$ in (\ref{T31}), we get
    $$\det{T_{3,1}(f)}\leq \max{u(x)}, $$
    where $u(x)=2 \lvert a_2 \rvert^2 x- 2 \lvert a_2 \rvert^2 -x^2 +1 $ and $ x=\lvert a_3\rvert$.
     Note that $u(x)$ attains its maximum value at $x=\lvert a_2\rvert^2$. When $\lvert a_2 \rvert^2$ lies in the range of $x=\lvert a_3 \rvert$, that is  $\lvert a_2 \rvert^2 \leq \frac{1}{3}\left(\lvert b_3\rvert +2 \lvert b_2 \rvert +2\right)$, then
\begin{align*}
                     \max {u(x)}&= u(\lvert a_2\rvert^2)\\
                                   &=(\lvert a_2\rvert^2-1)^2\\
                             &\leq \left\{
        \begin{array}{ll}
              1,  &     \lvert a_2\rvert^2 \leq 2 , \\\\
         \left(  \frac{1}{3}\left(\lvert b_3\rvert +2 \lvert b_2 \rvert +2\right) -1\right)^2, &  2\leq \lvert a_2\rvert^2 \leq  \frac{1}{3} \left(2+2 \lvert b_2\rvert+\lvert b_3\rvert \right)
        \end{array}
        \right.\\
            &= \left\{
        \begin{array}{ll}
              1,  &     \frac{1}{3}\left(\lvert b_3\rvert +2 \lvert b_2 \rvert +2\right) \leq 2 , \\\\
         \left(  \frac{1}{3}\left(\lvert b_3\rvert +2 \lvert b_2 \rvert +2\right) -1\right)^2, &  2 \leq  \frac{1}{3} \left(2+2 \lvert b_2\rvert+\lvert b_3\rvert \right).
        \end{array}
        \right.
\end{align*}
    For $x = \lvert a_3 \rvert  \leq \lvert a_2 \rvert^2$,
\begin{equation}\label{max}
\begin{aligned}
   \max{u(x)}&= u(\frac{1}{3}\left(\lvert b_3\rvert +2 \lvert b_2 \rvert +2\right))\\
             &\leq \frac{1}{18} \left( 6 \lvert b_2 \rvert^3 + \lvert b_2 \rvert^2 (3 \lvert b_3 \rvert +13)+4\lvert b_2 \rvert (\lvert b_3 \rvert -1) - 2 (1 - \lvert b_3 \rvert )^2 \right).
\end{aligned}
\end{equation}
     Using (\ref{hypo}), we observe that the maximum value of $u(x)$ at $x=\lvert a_3 \rvert$, given in (\ref{max}),  is greater than the value at $x=\lvert a_2 \rvert^2$, that is $\left(  \frac{1}{3}\left(\lvert b_3\rvert +2 \lvert b_2 \rvert +2\right) -1\right)^2$. Moreover,
    $$ \frac{1}{18} \left( 6 \lvert b_2 \rvert^3 + \lvert b_2 \rvert^2 (3 \lvert b_3 \rvert +13)+4\lvert b_2 \rvert (\lvert b_3 \rvert -1) - 2 (1 - \lvert b_3 \rvert )^2 \right) > 1,$$
    whenever
\begin{equation}\label{condi}
      6 \lvert b_2 \rvert^3 + \lvert b_2 \rvert^2 (3 \lvert b_3 \rvert +13)+4\lvert b_2 \rvert (\lvert b_3 \rvert -1) - 2 (1 - \lvert b_3 \rvert )^2  - 18 > 0.
\end{equation}
    Upper bound of $\det{T_{3,1}(f)}$ can be obtained from (\ref{max}) and (\ref{condi}).

    Bound in (\ref{uub}) is sharp for the function $f_5$ given by (\ref{Extremal}) and for $g(z)=f(z)=z$ equality holds in (\ref{uub1}).
\end{proof}
      If we replace $g(z)$ by $z/(1-z)$, $z/(1-z^2)$, $z/(1-z)^2$ and $z/(1-z+z^2)$, then we obtain the results for the classes  $\mathcal{F}_1$, $\mathcal{F}_2$, $\mathcal{F}_3$ and $\mathcal{F}_4$ respectively.
\begin{remark}
\begin{enumerate}
    \item If $f\in \mathcal{F}_1$, then $-5/4 \leq \det{T}_{2,1}(f)\leq 1$ \cite[Theorem 2.1]{Vkumar3}, \cite[Theorem 4]{Lecko2}.
    \item If $f\in \mathcal{F}_1$, then $T_{3,1}(f)\leq 11/9$ \cite[Theorem 2.2]{Vkumar3}, \cite[Theorem 5]{Lecko2}.
    \item If $f\in \mathcal{F}_2$, then $ 0 \leq  \det{T}_{2,1}(f)\leq 1$ \cite[Theorem 2.3]{Vkumar3},\cite[Theorem 2]{B.Kowa}.
    \item If $f\in \mathcal{F}_2$, then $T_{3,1}(f)\leq 1$ \cite[Theorem 2.4]{Vkumar3}, \cite[Theorem 3]{B.Kowa}.
    \item If $f\in \mathcal{F}_3$, then $ -3\leq  \det{T}_{2,1}(f)\leq 1$ \cite[Theorem 2.5]{Vkumar3},\cite[Theorem 5]{B.Kowa}.
    \item If $f\in \mathcal{F}_3$, then $T_{3,1}(f)\leq 8$ \cite[Theorem 2.6]{Vkumar3},\cite[Theorem 6]{B.Kowa}.
     \item If $f\in \mathcal{F}_4$, then $-5/4 \leq  \det{T}_{2,1}(f)\leq 1$ \cite[Theorem 2.7]{Vkumar3}, \cite[Theorem 2]{Lecko2}.
     \item If $f\in \mathcal{F}_4$, then $T_{3,1}(f)\leq 1$ \cite[Theorem 2.8]{Vkumar3}, \cite[Theorem 3]{Lecko2}.
\end{enumerate}
\end{remark}

\begin{theorem}\label{lastt}
     Let $f\in \mathcal{K}$ and $g(z)=z+ \sum_{n=2}^\infty b_n z^n \in \mathcal{S}^*$. If $\Tilde{g}(z)=z+ \sum_{n=2}^\infty i^{n-1} b_n z^n \in \mathcal{S}^*$, then the following sharp bound hold:
     $$ \lvert \det{T_{2,2}(f)}\rvert\leq \frac{1}{4} \left(2+\lvert b_2\rvert \right)^2+\frac{1}{9}\left( \lvert b_3\rvert +2 \lvert b_2\rvert+2\right)^2.$$
\end{theorem}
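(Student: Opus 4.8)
The plan is to write the $2\times 2$ determinant out explicitly and then feed in the coefficient bounds already derived for the close-to-convex class in Theorem~\ref{KTM}. Taking $m=n=2$ in the definition \eqref{intil} of the Toeplitz matrix gives
$$T_{2,2}(f)=\begin{bmatrix} a_2 & a_3 \\ \bar a_3 & a_2 \end{bmatrix},\qquad \det T_{2,2}(f)=a_2^2-\lvert a_3\rvert^2 .$$
Since $\lvert a_3\rvert^2\ge 0$ is real while $a_2^2$ is complex, the triangle inequality yields the clean estimate
$$\lvert \det T_{2,2}(f)\rvert=\bigl\lvert a_2^2-\lvert a_3\rvert^2\bigr\rvert\le \lvert a_2\rvert^2+\lvert a_3\rvert^2 .$$

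Next I would substitute the bounds $\lvert a_2\rvert\le (2+\lvert b_2\rvert)/2$ and $\lvert a_3\rvert\le (\lvert b_3\rvert+2\lvert b_2\rvert+2)/3$ recorded in \eqref{a2a3}, which come from the representation $zf'(z)=g(z)p(z)$ with $p\in\mathcal P$ and $\lvert p_n\rvert\le 2$. This immediately gives
$$\lvert \det T_{2,2}(f)\rvert\le \frac14(2+\lvert b_2\rvert)^2+\frac19(\lvert b_3\rvert+2\lvert b_2\rvert+2)^2 ,$$
which is exactly the claimed inequality. This portion is entirely routine once the determinant has been expanded, and no optimization over an auxiliary parameter is needed, unlike in the $\det T_{3,1}$ arguments.

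The substantive part is sharpness, and here lies the main obstacle: the triangle-inequality step is tight only when $a_2^2$ is a negative real number, that is, when $a_2$ is purely imaginary, whereas the natural extremizer $f_5$ of \eqref{Extremal} (obtained from $p(z)=(1+z)/(1-z)$, so that $p_1=p_2=2$) has $a_2$ real and positive. To repair this I would pass to the quarter-turn rotation $\tilde f(z)=-i\,f(iz)$, whose coefficients are $\tilde a_n=i^{\,n-1}a_n$; in particular $\tilde a_2=i\,a_2$ and $\tilde a_3=-a_3$, so that $\det T_{2,2}(\tilde f)=-a_2^2-\lvert a_3\rvert^2$. For the real-coefficient $f_5$ this forces $\lvert \det T_{2,2}(\tilde f)\rvert=\lvert a_2\rvert^2+\lvert a_3\rvert^2$, saturating both coefficient maxima at once.

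It remains to check that $\tilde f$ is an admissible competitor. The rotated function is close-to-convex with respect to $g_{\pi/2}(z)=z+\sum_{n\ge 2} i^{\,n-1}b_n z^n=\tilde g(z)$, and this is precisely where the hypothesis $\tilde g\in\mathcal S^*$ is invoked: it guarantees that $\tilde f\in\mathcal K$ for a bona fide starlike function. Since $\lvert \tilde b_n\rvert=\lvert i^{\,n-1}b_n\rvert=\lvert b_n\rvert$, the right-hand side of the inequality is unchanged, so $\tilde f$ attains the asserted bound and sharpness follows. I expect the only delicate bookkeeping to be confirming that the single choice $p_1=p_2=2$ realizes equality in both estimates of \eqref{a2a3} after rotation, which is immediate from the equality analysis already carried out for $f_5$.
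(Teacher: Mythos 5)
Your proposal is correct and follows essentially the same route as the paper: expand $\det T_{2,2}(f)=a_2^2-\lvert a_3\rvert^2$, apply the triangle inequality, and insert the coefficient bounds \eqref{a2a3}. Your extremal function $\tilde f(z)=-i f_5(iz)$ is in fact identical to the paper's $f_7$, since $z f_7'(z)/\Tilde{g}(z)=(1+iz)/(1-iz)$ is exactly the rotated Carath\'eodory relation $w f_5'(w)=g(w)(1+w)/(1-w)$ evaluated at $w=iz$, so the sharpness argument is the same function written two ways.
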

\begin{proof} From (\ref{intil}), we have
    $$\lvert \det{T_{2,2}(f)}\rvert = \lvert  a_2^2 -\lvert a_3\rvert^2\rvert\leq   \lvert a_3\rvert^2 + \lvert  a_2 \rvert^2.$$
     Using the estimate of $\lvert a_2\rvert$ and $\lvert a_3\rvert$ for $f\in \mathcal{K}$, given in $(\ref{a2a3})$, we obtain the required bound of $\lvert \det{T_{2,2}(f)}\rvert$.

     The bound is sharp for the function $f_7$ given by
     $$\frac{z f_7'(z)}{\Tilde{g}(z)}=\frac{1+i z}{1- iz}$$
     as $f_7$ has the following power series representation
     $$f_7(z)=z + i\left( 2  + b_2\right) \frac{z^2}{2} - \left( 2 + b_3 + 2 b_2 \right) \frac{z^3}{3} +\cdots.$$
\end{proof}
     By changing the function $g$ in Theorem \ref{lastt}, we can obtain results for other classes.
\begin{corollary}
\begin{enumerate}
    \item If $f\in \mathcal{F}_1$, then  $\lvert \det{T_{2,2}(f)} \rvert \leq 181/36$ and the bound is sharp for
     $$ f(z)= \int_0^{z} \left(\frac{1+i t}{1- i t}\right) \frac{\Tilde{g}(t)}{t} dt,$$
    where $\Tilde{g}(z)=z/(1- iz)\in \mathcal{S}^*.$
    \item If $f\in \mathcal{F}_2$, then $\lvert \det{T_{2,2}(f)} \rvert \leq 2$ and the bound is sharp for
     $$ f(z)= \int_0^{z} \left(\frac{1+i t}{1- i t}\right) \frac{\Tilde{g}(t)}{t} dt,$$
    where $\Tilde{g}(z)=z/(1 + z^2)\in \mathcal{S}^*.$
    \item If $f\in \mathcal{F}_3$, then $\lvert \det{T_{2,2}(f)} \rvert \leq 13 $ and the bound is sharp for
     $$ f(z)= \int_0^{z} \left(\frac{1+i t}{1- i t}\right) \frac{\Tilde{g}(t)}{t} dt,$$
    where $\Tilde{g}(z)=z/(1- i z)^2\in \mathcal{S}^*.$
    \item If $f\in \mathcal{F}_4$, then $\lvert \det{T_{2,2}(f)} \rvert \leq  145/36 $ and the bound is sharp for
     $$ f(z)= \int_0^{z} \left(\frac{1+i t}{1- i t}\right) \frac{\Tilde{g}(t)}{t} dt,$$
    where $\Tilde{g}(z)=z/(1- iz + z^2)\in \mathcal{S}^*.$
    \item If $f\in \mathcal{R}$, then $\lvert \det{T_{2,2}(f)} \rvert \leq  13/9$ and the bound is sharp for
       $$f(z)= \int_0^{z} \left(\frac{1+i t}{1- i t}\right)  dt.$$
\end{enumerate}
\end{corollary}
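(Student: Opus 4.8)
The plan is to turn the estimate into a direct application of the triangle inequality together with the coefficient bounds already obtained for $\mathcal{K}$, and then to isolate the real work in constructing the extremal function. First I would specialise the Toeplitz matrix (\ref{intil}) to $m=n=2$, giving
$$ T_{2,2}(f)=\begin{bmatrix} a_2 & a_3 \\ \bar a_3 & a_2 \end{bmatrix}, \qquad \det T_{2,2}(f)=a_2^2-\lvert a_3\rvert^2 . $$
In contrast with the Hermitian determinants $\det T_{m,1}(f)$ treated in the earlier theorems, here $a_2$ is in general complex, so $\det T_{2,2}(f)$ is complex and one must estimate its modulus. By the triangle inequality $\lvert \det T_{2,2}(f)\rvert=\lvert a_2^2-\lvert a_3\rvert^2\rvert\le \lvert a_2\rvert^2+\lvert a_3\rvert^2$, and inserting the bounds $\lvert a_2\rvert\le (2+\lvert b_2\rvert)/2$ and $\lvert a_3\rvert\le(\lvert b_3\rvert+2\lvert b_2\rvert+2)/3$ from (\ref{a2a3}) (established while proving Theorem \ref{KTM}) gives the stated right-hand side at once. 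Thus the inequality part demands no new idea beyond the known coefficient estimates.

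The content of the theorem is the sharpness, and the obstacle is that the triangle inequality step is an equality only when $a_2^2$ and $-\lvert a_3\rvert^2$ have the same argument, i.e.\ when $a_2^2$ is a non-positive real number, equivalently when $a_2$ is purely imaginary; at the same time $\lvert a_2\rvert$ and $\lvert a_3\rvert$ must reach their maxima. The purpose of the rotated coefficients $\tilde g(z)=z+\sum_{n\ge 2} i^{n-1}b_n z^n$ and of the rotated Carath\'eodory factor $(1+iz)/(1-iz)$ is precisely to enforce both phase conditions simultaneously. I would therefore define $f_7$ by $z f_7'(z)=\tilde g(z)\,(1+iz)/(1-iz)$; since $(1+iz)/(1-iz)$ maps $\mathbb{D}$ onto the right half-plane and, by hypothesis, $\tilde g\in\mathcal{S}^*$, the function $f_7$ is close-to-convex with associated starlike function $\tilde g$, so $f_7\in\mathcal{K}$ as required.

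Finally I would expand $(1+iz)/(1-iz)=1+\sum_{n\ge1}2i^n z^n$, multiply by the series of $\tilde g$, and read off $a_2=i(2+b_2)/2$ and $a_3=-(2+2b_2+b_3)/3$, matching the displayed expansion of $f_7$. Taking a starlike $g$ with nonnegative coefficients (as for the Koebe-type functions appearing in the subsequent corollary) makes $a_2$ purely imaginary, so $a_2^2=-(2+b_2)^2/4\le 0$, while $a_3\le 0$ is real; hence $\det T_{2,2}(f_7)$ is negative real and $\lvert \det T_{2,2}(f_7)\rvert=\lvert a_2\rvert^2+\lvert a_3\rvert^2$ equals the claimed bound, the identity $\lvert i^{n-1}b_n\rvert=\lvert b_n\rvert$ ensuring that it is the same expression whether written with the coefficients of $g$ or of $\tilde g$. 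I expect the only delicate point to be tracking the powers of $i$ through the product so that the two equality conditions hold together; the remaining computations are routine.
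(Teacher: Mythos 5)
Your proposal is correct and follows essentially the same route as the paper: the inequality is the triangle-inequality estimate $\lvert a_2^2-\lvert a_3\rvert^2\rvert\le\lvert a_2\rvert^2+\lvert a_3\rvert^2$ combined with the coefficient bounds (\ref{a2a3}), and the sharpness is exactly the paper's Theorem~\ref{lastt} construction $zf_7'(z)=\tilde{g}(z)(1+iz)/(1-iz)$, which the corollary then specializes to $g(z)=z/(1-z)$, $z/(1-z^2)$, $z/(1-z)^2$, $z/(1-z+z^2)$ and $g(z)=z$, yielding the constants $181/36$, $2$, $13$, $145/36$ and $13/9$ upon substituting $(\lvert b_2\rvert,\lvert b_3\rvert)=(1,1),(0,1),(2,3),(1,0),(0,0)$. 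Your equality analysis (purely imaginary $a_2$, real nonpositive $a_3$, and $\lvert i^{n-1}b_n\rvert=\lvert b_n\rvert$) is precisely the mechanism behind the paper's rotated $\tilde{g}$; note only that equality requires just $b_2\ge 0$ and $b_3\ge 0$ rather than nonnegativity of all coefficients (relevant for $\mathcal{F}_4$, where $g(z)=z/(1-z+z^2)$ has $b_4<0$ but $b_2=1$, $b_3=0$), and that $\tilde{g}\in\mathcal{S}^*$ in each case because $\tilde{g}(z)=e^{-i\pi/2}g(e^{i\pi/2}z)$ is a rotation of the starlike $g$.
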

\section*{Declarations}
\subsection*{Funding}
The work of the Surya Giri is supported by University Grant Commission, New-Delhi, India  under UGC-Ref. No. 1112/(CSIR-UGC NET JUNE 2019).
\subsection*{Conflict of interest}
	The authors declare that they have no conflict of interest.
\subsection*{Author Contribution}
    Each author contributed equally to the research and preparation of manuscript.
\subsection*{Data Availability} Not Applicable.
\noindent

\end{document}